\title{Henselianity in NIP $\Ff_p$-algebras}
\author{Will Johnson}
\DeclareMathOperator*{\forkindep}{\raise0.2ex\hbox{\ooalign{\hidewidth$\vert$\hidewidth\cr\raise-0.9ex\hbox{$\smile$}}}}
\newcommand{\Sh}{\mathrm{Sh}}
\newcommand{\Frac}{\operatorname{Frac}}
\newcommand{\Spec}{\operatorname{Spec}}
\newtheorem{theorem}{Theorem}[section] 
\newtheorem{lemma}[theorem]{Lemma}
\newtheorem{corollary}[theorem]{Corollary}
\newtheorem{fact}[theorem]{Fact}
\newtheorem{proposition}[theorem]{Proposition}
\newtheorem*{theorem-star}{Theorem}
\newtheorem*{lemma-star}{Theorem}
\newtheorem*{conjecture-star}{Conjecture}
\theoremstyle{definition}
\newtheorem{definition}[theorem]{Definition}
\newtheorem{remark}[theorem]{Remark}
\newtheorem{claim}[theorem]{Claim}
\newtheorem*{acknowledgment}{Acknowledgments}
\newcommand{\Nn}{\mathbb{N}}
\newcommand{\Ff}{\mathbb{F}}
\newcommand{\mm}{\mathfrak{m}}
\newcommand{\pp}{\mathfrak{p}}
\newcommand{\qq}{\mathfrak{q}}
\newenvironment{claimproof}[1][\proofname]
               {
                 \proof[#1]
                 
               }
               {
                 \endproof
               }
\begin{document}
\maketitle

\begin{abstract}
  We prove an assortment of results on (commutative and unital) NIP
  rings, especially $\Ff_p$-algebras.  Let $R$ be a NIP ring.  Then
  every prime ideal or radical ideal of $R$ is externally definable,
  and every localization $S^{-1}R$ is NIP.  Suppose $R$ is
  additionally an $\Ff_p$-algebra.  Then $R$ is a finite product of
  Henselian local rings.  Suppose in addition that $R$ is integral.
  Then $R$ is a Henselian local domain, whose prime ideals are
  linearly ordered by inclusion.  Suppose in addition that the residue
  field $R/\mm$ is infinite.  Then the Artin-Schreier map $R \to R$ is
  surjective (generalizing the theorem of Kaplan, Scanlon, and Wagner
  for fields).
\end{abstract}

\section{Introduction}

The class of NIP theories has played a major role in contemporary
model theory.  See \cite{NIPguide} for an introduction to NIP.  In
recent years, much work has been done on the problem of classifying
NIP fields and NIP rings.  A conjectural classification of NIP fields
has emerged through work of Anscombe, Halevi, Hasson, and Jahnke
\cite{halevi-hasson-jahnke, NIP-char}, and partial results towards
this conjectural classification have been obtained by the author in
the setting of finite dp-rank \cite{wjdpmin-published, prdf1b, prdf6}.

NIP fields are closely connected to NIP valuation rings.  Conjecturally,
\begin{itemize}
\item Every NIP valuation ring is Henselian.
\item Every infinite NIP field is elementarily equivalent to
  $\Frac(R)$ for some NIP non-trivial valuation ring $R$.
\end{itemize}
These conjectures form the basis for the proposed classification of
NIP fields \cite{NIP-char}, and are known to hold assuming finite
dp-rank \cite{prdf6}.  Additionally, the henselianity conjecture is
known in positive characteristic: if $R$ is a NIP valuation ring and
$\Frac(R)$ has positive characteristic, then $R$ is Henselian
\cite[Theorem~2.8]{prdf1a}.

More generally, one would like to understand (commutative) NIP rings,
especially NIP integral domains.  A first step in this direction is
the recent work of Halevi and d'Elb\'ee on dp-minimal integral domains
\cite{halevi-delbee}.  Among other things, they show that if $R$ is a
dp-minimal integral domain, then $R$ is a local ring, the prime ideals
of $R$ are a chain, the localization of $R$ at any non-maximal prime
is a valuation ring, and $R$ is a valuation ring whenever its residue
field is infinite.

In the present paper, we consider a NIP integral domain $R$ such that
$\Frac(R)$ has positive characteristic.  By analogy with
\cite{halevi-delbee}, we show that $R$ is a local ring whose primes
ideals are linearly ordered by inclusion.  Generalizing the earlier
henselianity theorem for valuation rings, we show that $R$ is a
Henselian local ring.  These results may help to extend the work of
Halevi and d'Elb\'ee to ``positive characteristic'' NIP integral
domains.


\subsection{Main results}
All rings are assumed to be commutative and unital.  In
Section~\ref{sec1} we consider a general NIP ring $R$.  Our main
results are the following:
\begin{itemize}
\item Any localization $S^{-1}R$ is interpretable in the Shelah
  expansion $R^\Sh$, and is therefore NIP
  (Theorem~\ref{localizations}).
\item Any radical ideal in $R$ is externally definable (Theorem~\ref{radical}).
\end{itemize}
In Section~\ref{sec2}, we restrict to the case where $R$ is an
$\Ff_p$-algebra, and obtain significantly stronger results:
\begin{itemize}
\item $R$ is a finite product of Henselian local rings (Theorem~\ref{prod-of-hens}).
\item If $R$ is an integral domain, then $R$ is a Henselian local
  domain (Theorem~\ref{one-hens}), and the prime ideals of $R$ are
  linearly ordered by inclusion (Theorem~\ref{linear}).
\item If $R$ is a local integral domain with maximal ideal $\mm$ and
  $R/\mm$ is infinite, then the Artin-Schreier map $R \to R$ is
  surjective (Theorem~\ref{ksw++}).
\end{itemize}
The henselianity results generalize \cite[Theorem~2.8]{prdf1a}, which
handled the case where $R$ is a valuation ring.  The surjectivity of
the Artin-Schreier map generalizes a theorem of Kaplan, Scanlon, and
Wagner \cite[Theorem~4.4]{NIPfields}, which handled the case where $R$ is a
field.

\section{General NIP rings} \label{sec1}
\subsection{Finite width}
The \emph{width} of a poset $(P,\le)$ is the maximum size of an
antichain in $P$.  We write $\Spec R$ for the poset of prime ideals in
$R$, ordered by inclusion.  This is an abuse of notation, since we are
forgetting the usual scheme and topology structure on $\Spec R$, and
then adding the poset structure.
\begin{fact} \label{width}
  Let $R$ be a NIP ring.  Then $\Spec R$ has finite width.  Moreover,
  there is a uniform finite bound on the width of $\Spec R'$ for $R'
  \succeq R$.
\end{fact}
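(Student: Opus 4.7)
The plan is to derive the width bound from the NIP property applied to a single ring-theoretic formula, namely the divisibility formula $\phi(x; y) := \exists z\,(x = yz)$. Since $R$ is NIP, $\phi$ has some finite VC-dimension $d$ in $\Th(R)$, and this same $d$ controls the VC-dimension of $\phi$ in every $R' \succeq R$. The strategy is to show directly that any antichain of primes in any such $R'$ shatters a witness set via $\phi$, so that antichains have size at most $d$.

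Given pairwise incomparable primes $\mathfrak{p}_1, \ldots, \mathfrak{p}_n$ of $R'$, I would first invoke prime avoidance to pick $a_i \in \mathfrak{p}_i \setminus \bigcup_{j \neq i} \mathfrak{p}_j$. Then for each $S \subseteq \{1, \ldots, n\}$ set $b_S := \prod_{i \in S} a_i$, with the convention $b_\emptyset := 1$. Primality of each $\mathfrak{p}_j$ and the choice of the $a_i$ then give the key feature $b_S \in \mathfrak{p}_j$ iff $j \in S$: for $j \in S$ the factor $a_j \in \mathfrak{p}_j$ pulls $b_S$ into $\mathfrak{p}_j$, while for $j \notin S$ every factor $a_i$ of $b_S$ lies outside the prime $\mathfrak{p}_j$.

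The next step is to verify that $\{b_S : S \subseteq \{1,\ldots,n\}\}$ shatters $\{a_1, \ldots, a_n\}$ with respect to $\phi$. If $i \in S$ then $a_i$ is by construction a factor of $b_S$, so $\phi(b_S, a_i)$ holds. Conversely if $a_i \mid b_S$, then $a_i \in \mathfrak{p}_i$ forces $b_S \in \mathfrak{p}_i$, and hence $i \in S$ by the preceding paragraph. This is exactly shattering, so $n \leq d$, which gives both finite width of $\Spec R$ and the uniform bound on the width of $\Spec R'$ for $R' \succeq R$.

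No serious obstacle arises: the only point of insight is to recognize that divisibility is the right formula to extract an IP-pattern from an antichain of primes, using products of the ``coordinate-wise supported'' elements furnished by prime avoidance. Once that is in hand, NIP does the rest automatically, which is presumably why the authors record this result as a Fact.
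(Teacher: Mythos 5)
Your argument is correct, and it is essentially the argument behind the result the paper cites: the paper records this as a Fact, quoting Halevi--d'Elb\'ee (who attribute it to Simon) rather than reproducing a proof, and the divisibility trick with products of ``coordinate-wise supported'' elements is exactly the right mechanism. Each step checks out: prime avoidance applies because the $\mathfrak{p}_i$ are pairwise incomparable (so $\mathfrak{p}_i \not\subseteq \bigcup_{j \neq i} \mathfrak{p}_j$), primality gives $b_S \in \mathfrak{p}_j$ iff $j \in S$, and the case $b_\varnothing = 1$ works because each $a_i$ is a non-unit. One small imprecision worth flagging: you shatter the set $\{a_1,\dots,a_n\}$ sitting in the \emph{parameter} slot $y$ of $\phi(x;y)$, so the bound on $n$ is the dual VC-dimension of $\phi$ (equivalently, the VC-dimension of the opposite formula $\exists z\,(x = yz)$ with $y$ as the object variable), not the VC-dimension of $\phi$ itself. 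This costs nothing, since a formula has IP iff its opposite does, so the dual VC-dimension is finite as well; and because both quantities are determined by the theory, the same bound applies in every $R' \succeq R$, giving the uniformity claim.
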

Fact~\ref{width} is proved by Halevi and d'Elb\'ee
\cite[Proposition~2.1, Remark~2.2]{halevi-delbee}, who attribute it to
Pierre Simon.

Fact~\ref{width} has a number of useful corollaries, which we shall
use in later sections.  First of all, Dilworth's theorem gives the
following corollary:
\begin{corollary} \label{dilworth}
  If $R$ is a NIP ring, then $\Spec R$ is a finite union of chains.
\end{corollary}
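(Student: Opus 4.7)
The statement is essentially immediate from Fact~\ref{width} combined with Dilworth's theorem, so my plan is just to invoke these two results.

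First I would note that by Fact~\ref{width}, there is a finite upper bound $n$ on the width of $\Spec R$. Then I would apply Dilworth's theorem: any poset of finite width $n$ can be partitioned into $n$ chains. This immediately gives that $\Spec R$ is a finite union of chains.

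The only subtlety is that $\Spec R$ may be infinite, so one needs the infinitary form of Dilworth's theorem (a poset of finite width $n$ admits a partition into $n$ chains). This is standard; one typical argument is a compactness/Zorn's lemma reduction from the finite case. Alternatively, since Fact~\ref{width} gives a uniform bound on the width of $\Spec R'$ for every $R' \succeq R$, one could pass to a sufficiently saturated elementary extension and apply a compactness argument directly in the model-theoretic setting, though this seems to be overkill here.

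There is no real obstacle: the hard work has already been done in Fact~\ref{width}, and Dilworth's theorem does the rest.
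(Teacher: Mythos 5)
Your proposal matches the paper exactly: the paper derives this corollary directly from Fact~\ref{width} via Dilworth's theorem, with no further argument. Your remark about needing the infinitary version of Dilworth's theorem (finite width $n$ implies a partition into $n$ chains, for possibly infinite posets) is a correct and welcome clarification of the same approach.
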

Another trivial corollary of Fact~\ref{width} is the following:
\begin{corollary} \label{fin-man}
  If $R$ is a NIP ring, then $R$ has finitely many maximal ideals and
  finitely many minimal prime ideals.
\end{corollary}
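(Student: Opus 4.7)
The plan is very short, since the author flags this as a trivial consequence of Fact~\ref{width}. The key observation is that both the set of maximal ideals and the set of minimal prime ideals of $R$ form antichains in the poset $\Spec R$: by definition, no maximal ideal properly contains another maximal ideal, and no minimal prime ideal properly contains another minimal prime ideal, so in either case no two distinct elements of the set are comparable under inclusion.

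Given this, I would simply apply Fact~\ref{width}: since $\Spec R$ has finite width, every antichain in $\Spec R$ is finite. In particular, the antichain of maximal ideals and the antichain of minimal primes are both finite, which is exactly the conclusion. No further structure of $R$ is used.

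There is no real obstacle here; the only thing one might worry about is whether the relevant sets are genuinely antichains (which is immediate from the definitions of ``maximal'' and ``minimal'') and whether finite width bounds \emph{all} antichains (yes, by definition of width as the supremum over antichain sizes, which is finite by Fact~\ref{width}). So the entire proof is essentially one line.
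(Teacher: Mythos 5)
Your proof is correct and is exactly the argument the paper intends (the paper omits it as a ``trivial corollary'' of Fact~\ref{width}): maximal ideals and minimal primes are antichains in $\Spec R$, so finite width bounds their number. Nothing further is needed.
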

Also, using Beth's implicit definability, we see the following:
\begin{corollary} \label{def-max}
  If $R$ is a NIP ring, then the maximal ideals of $R$ are definable.
\end{corollary}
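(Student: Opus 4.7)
The plan is to apply Beth's implicit definability theorem after first isolating each maximal ideal via a quotient in which it becomes the unique maximal ideal. By Corollary~\ref{fin-man}, $R$ has only finitely many maximal ideals $\mm_1,\dots,\mm_n$, and distinct maximal ideals of any commutative ring are coprime. The Chinese Remainder Theorem therefore furnishes elements $a_1,\dots,a_n\in R$ with $a_i\in\mm_i$ and $a_i\notin\mm_j$ for $j\ne i$.

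Fix an index $i$. Then $R/(a_i)$ is a local ring: its maximal ideals correspond to those maximal ideals of $R$ which contain $a_i$, of which $\mm_i$ is the only one by construction. Since ``being local'' is first-order in the language of rings (for every $x$, either $x$ or $1-x$ is invertible), $R'/(a_i)$ is local for every elementary extension $R'\succeq R$, with its maximal ideal equal to the definable set of non-units. I would then expand the language by a fresh unary predicate $P$ and extend $\Th(R,a_i)$ by axioms saying that $P$ equals the preimage in $R$ of the maximal ideal of $R/(a_i)$. In any $R'\succeq R$ the interpretation of $P$ is forced to be the preimage of the non-units of the local ring $R'/(a_i)$, so $P$ is implicitly defined over $L\cup\{a_i\}$; Beth's theorem then yields an explicit $L$-formula with parameter $a_i$ defining $P$, and specialized to $R$ this defines $\mm_i$.

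The only step needing care is the persistence of locality of $R/(a_i)$ to elementary extensions, which is what secures the uniqueness clause required by Beth; everything else is standard commutative algebra together with the finiteness provided by Corollary~\ref{fin-man}. (In fact one could sidestep Beth and write the definition out directly as $x\in\mm_i \iff \forall y\,\forall z\ (xy-1\ne a_i z)$, but the Beth-theoretic framing makes the structure of the argument more transparent and is the route I would follow.)
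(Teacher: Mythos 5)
Your proof is correct, but it takes a genuinely different route from the paper. The paper applies Beth's theorem (Fact~\ref{beth}) in one stroke to the theory asserting ``$P$ is a maximal ideal'', verifying the hypothesis via the uniform bound on the number of maximal ideals in all $R' \succeq R$ from the ``moreover'' clause of Fact~\ref{width}; this gives definability of every maximal ideal at once, but non-constructively. You instead isolate each $\mm_i$ algebraically: Corollary~\ref{fin-man} plus the Chinese Remainder Theorem yields $a_i \in \mm_i$ with $a_i \notin \mm_j$ for $j \ne i$, so $\mm_i$ is the unique maximal ideal containing $a_i$, $R/(a_i)$ is local, and $\mm_i$ is the preimage of its non-units, which is cut out by the explicit formula $\forall y\, \forall z\, (xy - 1 \ne a_i z)$ with parameter $a_i$. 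This is precisely one of the ``more direct algebraic proofs'' the paper alludes to right after the corollary, and it is sound: the only facts used are the finiteness of $\MaxSpec R$ and standard commutative algebra. One remark: your Beth wrapper is redundant as you set it up, since the axiom ``$P$ equals the preimage of the non-units of $R/(a_i)$'' is already an explicit definition; if one instead axiomatizes ``$P$ is a maximal ideal containing $a_i$'', then the uniqueness of the expansion in every $R' \succeq R$ does follow from the first-order locality sentence you cite (for all $x$, $x$ or $1-x$ is invertible mod $a_i$), and Beth applies --- but the direct formula needs no elementary extensions at all. The trade-off is that your argument requires choosing the parameters $a_i$ and handles the maximal ideals one at a time, while the paper's counting argument needs the uniform bound across elementary extensions but produces no formula and requires no choice of parameters.
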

For completeness, we give the proof.  The proof uses the following
form of Beth's theorem:
\begin{fact} \label{beth}
  Let $M$ be an $L_0$-structure.  Let $L$ be a language extending
  $L_0$ and let $T$ be an $L$-theory.  Suppose there is a cardinal
  $\kappa$ such that for any $M' \succeq M$ there are at most
  $\kappa$-many expansions of $M'$ to a model of $T$.  Then every such
  expansion is an expansion by definitions.
\end{fact}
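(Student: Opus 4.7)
My plan is to reduce Fact~\ref{beth} to the classical Beth definability theorem: if an $L$-theory $T'$ implicitly defines a symbol $R \in L \setminus L_0$ over an $L_0$-subtheory $T_0$ — meaning any two models of $T'$ sharing an $L_0$-reduct agree on $R$ — then $T'$ explicitly defines $R$ by an $L_0$-formula modulo $T_0$. I would cite classical Beth as a black box (its proof is the standard duplicate-language compactness argument) and focus on reducing our hypothesis to its implicit-definability premise.

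Set $T_0 := \mathrm{eldiag}_{L_0}(M)$, the $L_0$-elementary diagram of $M$; then models of $T \cup T_0$ are exactly the $L$-expansions of $L_0$-elementary extensions of $M$ to models of $T$. Handling one symbol at a time, fix $R \in L \setminus L_0$. It suffices to show that $R$ is implicitly defined by $T \cup T_0$, for then classical Beth produces an $L_0(M)$-formula $\phi(\bar x)$ with $T \cup T_0 \vdash \forall \bar x\,(R(\bar x) \leftrightarrow \phi(\bar x))$, giving the desired explicit definition in every expansion of every $L_0$-elementary extension of $M$.

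For implicit definability, I would argue by contradiction: assume some $M' \succeq_{L_0} M$ admits two distinct $T$-expansions $(M')^*_1 \neq (M')^*_2$, and amplify this into an $L_0$-elementary extension of $M$ with more than $\kappa$ distinct $T$-expansions. Pass to a $\kappa^+$-saturated $L_0$-elementary extension $M'' \succeq_{L_0} M'$, and fix one $T$-expansion $(M'')^*$. For each $\sigma \in \Aut(M''/M)$, the pushforward $\sigma_*(M'')^*$ — the $L$-structure on $M''$ with $L_0$-reduct $M''$ and each new symbol interpreted as $\sigma$ applied to its $(M'')^*$-interpretation — is again a $T$-expansion of $M''$, since $\sigma$ is an $L$-isomorphism $(M'')^* \to \sigma_*(M'')^*$. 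Distinct cosets of the joint stabilizer $\Stab((M'')^*) \leq \Aut(M''/M)$ therefore produce distinct expansions of $M''$.

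The main obstacle is showing $[\Aut(M''/M) : \Stab((M'')^*)] > \kappa$. The existence of $(M')^*_1 \neq (M')^*_2$ over $M'$ prevents the $T$-expansion of $M''$ from being determined by $L_0(M)$-type-definable data: were it type-definable over $M$, compactness would pin down a unique expansion of $M'$, contradicting the choice of $(M')^*_1, (M')^*_2$. Invoking the standard correspondence in a $\kappa^+$-saturated model between $\Aut(M''/M)$-invariance and $L_0(M)$-type-definability, failure of type-definability forces the $\Aut(M''/M)$-orbit of $(M'')^*$ to have size exceeding $\kappa$. This violates the hypothesis of Fact~\ref{beth}, establishes implicit definability of $R$, and completes the reduction to classical Beth.
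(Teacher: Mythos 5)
The paper does not actually prove Fact~\ref{beth}; it quotes it as a known result (it is a Chang--Makkai-type definability theorem, not the classical Beth theorem), so the only question is whether your argument is correct --- and it has a genuine gap. Your plan reduces the Fact to classical Beth, which requires \emph{implicit definability}: any two models of $T \cup \mathrm{eldiag}_{L_0}(M)$ with the same $L_0(M)$-reduct must agree on $R$. But the hypothesis of the Fact only bounds the number of expansions by $\kappa$; it does not force uniqueness, and uniqueness genuinely fails in the paper's intended application (Corollary~\ref{def-max}), where the new predicate is a maximal ideal and a ring can have several. Concretely, let $M = K \times K$ for an infinite field $K$, $L = L_0 \cup \{P\}$, and $T$ the statement ``$P$ is a maximal ideal.'' Every $M' \succeq M$ splits as a product of two fields along the idempotents $(1,0),(0,1)$, hence has exactly two expansions to a model of $T$, so the hypothesis holds with $\kappa = 2$ and the Fact's conclusion holds (each maximal ideal is definable using an idempotent as parameter); yet $M$ itself already has two distinct expansions, so implicit definability is false, and the statement you would extract from Beth --- a single $L_0(M)$-formula $\phi$ with $T \cup T_0 \vdash \forall \bar x\,(R(\bar x) \leftrightarrow \phi(\bar x))$ --- is false as well, since it would force each $M'$ to have a unique expansion.

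The failure surfaces exactly in your amplification step. You claim that two distinct expansions of some $M'$ prevent the expansion from being type-definable over $M$, and that failure of type-definability forces the $\Aut(M''/M)$-orbit of a fixed expansion of a $\kappa^+$-saturated $M''$ to exceed $\kappa$. Both halves are wrong: in the example above each of the two expansions is definable over parameters in $M$ (so ``type-definable over $M$ $\Rightarrow$ unique expansion of $M'$'' fails), and in general a bounded orbit under $\Aut(M''/M)$ yields only $M$-invariance (being a union of types over $M$), not type-definability, so the dichotomy you invoke is not the standard saturation correspondence. A correct proof must accommodate several ``solutions'' and produce, for each expansion separately, a defining formula with parameters from the model at hand; this is the content of the Chang--Makkai theorem (boundedly many solutions in every model implies each solution is definable with parameters), whose proof goes through a splitting/tree construction showing that non-definability produces unboundedly many conjugate expansions --- a genuinely different mechanism from the uniqueness-based Beth reduction you propose.
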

\begin{proof}[Proof of Corollary~\ref{def-max}]
  Let $L_0$ be the language of rings and $L$ be $L_0 \cup \{P\}$ where
  $P$ is a unary predicate symbol.  Let $T$ be the statement saying
  that $P$ is a maximal ideal, i.e.,
  \begin{gather*}
    \forall x, y : P(x) \wedge P(y) \rightarrow P(x+y) \\
    P(0) \\
    \forall x, y : P(x) \rightarrow P(x \cdot y) \\
    \neg P(1) \\
    \forall x : \neg P(x) \rightarrow \exists y : P(xy-1).
  \end{gather*}
  If $R' \succeq R$, then an expansion of $R'$ to a model of $T$ is
  the same thing as a maximal ideal of $R'$.  The number of such
  maximal ideals is uniformly bounded by Fact~\ref{width}, and
  so Fact~\ref{beth} shows that each such maximal ideal is definable.
\end{proof}
(Of course, there are other more direct algebraic proofs of
Corollary~\ref{def-max}.)

Recall that the Jacobson radical of a ring is the intersection of its
maximal ideals.
\begin{corollary}\label{Jacobson}
  Let $R$ be a NIP integral domain.  Then the Jacobson radical of $R$
  is non-zero.
\end{corollary}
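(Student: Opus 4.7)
The plan is to combine Corollary~\ref{fin-man} with the elementary fact that in an integral domain a finite product of non-zero ideals is itself non-zero. By Corollary~\ref{fin-man}, the NIP ring $R$ has only finitely many maximal ideals, say $\mm_1, \ldots, \mm_n$, so by definition the Jacobson radical is the finite intersection $J(R) = \mm_1 \cap \cdots \cap \mm_n$, which contains the ideal product $\mm_1 \mm_2 \cdots \mm_n$. It therefore suffices to show that this product is non-zero.

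To execute this, I would (after passing to the non-trivial case where $R$ is not a field, so that each $\mm_i$ is a proper non-zero ideal) choose a non-zero element $a_i \in \mm_i$ for every $i$. Since $R$ is a domain, the product $a_1 a_2 \cdots a_n$ is non-zero, and by construction it lies in $\mm_1 \mm_2 \cdots \mm_n \subseteq J(R)$. Hence $J(R) \neq 0$. The degenerate case in which $R$ is a field needs to be read off separately, since then the unique maximal ideal is $(0)$ and $J(R)=(0)$; presumably the intended reading of the corollary is ``$R$ not a field'', or else the statement is being used downstream only when $R$ has a non-zero proper ideal.

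There is no genuine obstacle beyond invoking Corollary~\ref{fin-man}: all of the model-theoretic content is already packaged into the finiteness of $\MaxSpec R$, which in turn rests on Fact~\ref{width}. Once that finiteness is in hand, the conclusion is a routine commutative algebra observation about domains. The NIP hypothesis is nevertheless essential at this step, since dropping it allows natural domains such as $\Zz$ for which $\MaxSpec R$ is infinite and $J(R) = 0$; it is precisely the collapse from infinitely many maximal ideals to finitely many that makes the product argument go through.
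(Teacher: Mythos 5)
Your proof is correct and is essentially the argument the paper gives: finitely many maximal ideals (Corollary~\ref{fin-man}) plus the fact that in a domain a finite intersection of non-zero ideals is non-zero (the paper phrases this via the intersection directly, you via the ideal product, which is the same computation with non-zero elements $a_1\cdots a_n$). Your side remark about the degenerate case where $R$ is a field is a fair observation that the paper glosses over, but it does not change the substance, since the corollary is only used downstream for domains that are not fields.
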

\begin{proof}
  In a domain, the intersection of two non-zero ideals is non-zero.
\end{proof}
\begin{corollary} \label{joke}
  Let $R$ be a NIP integral domain that is not a field.  Let $K =
  \Frac(R)$.  There is a non-trivial, non-discrete Hausdorff field
  topology on $K$ characterized by either of the following:
  \begin{itemize}
  \item The family of sets $\{aR : a \in K^\times\}$ is a neighborhood
    basis of 0.
  \item The set of non-zero ideals of $R$ is a neighborhood basis of
    0.
  \end{itemize}
\end{corollary}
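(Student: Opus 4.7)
The plan is to verify that the family $\mathcal{B} = \{aR : a \in K^\times\}$ is a neighborhood basis of $0$ for a field topology on $K$ with the claimed properties, and then observe that the alternative description in terms of non-zero ideals defines the same topology.

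First I would check that the two descriptions agree. Each $aR$ with $a \in K^\times$ is a non-zero $R$-submodule of $K$ which contains the non-zero principal ideal $\alpha R \subseteq R$, where $\alpha$ is any numerator in a representation $a = \alpha/\beta$ with $\alpha, \beta \in R \setminus \{0\}$. Conversely, every non-zero ideal $I$ of $R$ contains $aR$ for any $0 \neq a \in I$. Moreover $\mathcal{B}$ is a filter base: because $R$ is a domain, $aR \cap bR$ contains a non-zero element $c$, and then $cR \subseteq aR \cap bR$. Since each $aR$ is an additive subgroup of $K$, translating $\mathcal{B}$ automatically defines a group topology on $(K,+)$.

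Next I would verify the ring-topology axioms. Continuity of scalar multiplication by a fixed $a \in K^\times$ is immediate, as $a$-multiplication sends $(b/a)R$ into $bR$. Continuity of multiplication at $(0,0)$ follows from $\alpha R \cdot \alpha R = \alpha^2 R \subseteq \alpha R$ for $\alpha \in R \setminus \{0\}$, combined with the fact that every basic neighborhood $bR$ contains some $\alpha R$ with $\alpha \in R \setminus \{0\}$.

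The main step is continuity of inversion at $1$, and this is where Corollary~\ref{Jacobson} enters: the Jacobson radical $J$ of $R$ is non-zero. Given a basic neighborhood $bR$ of $0$, I would choose a non-zero element $c \in bR \cap J$; such $c$ exists because $\alpha J \subseteq bR \cap J$ for $\alpha$ a numerator of $b$, and $\alpha J \neq 0$ in the domain $R$. Then $cR \subseteq J$, so for every $x \in cR$ the element $1 + x$ is a unit of $R$, and
\[
(1+x)^{-1} - 1 = -x(1+x)^{-1} \in xR \subseteq cR \subseteq bR,
\]
witnessing continuity of inversion at $1$. Hausdorffness reduces to $\bigcap_a aR = \{0\}$: if a non-zero $x$ lay in every $aR$, then taking $a = xz$ for arbitrary $z \in K^\times$ would force $z^{-1} \in R$, making $R = K$ and contradicting the hypothesis. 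Non-discreteness and non-triviality are immediate, since $0 \neq a \in aR$ and $aR \neq K$ (the latter by the same argument as Hausdorffness). The main obstacle is the inversion step, which requires Corollary~\ref{Jacobson} to guarantee that $1 + x$ is a unit of $R$ for sufficiently small $x$; without a non-zero Jacobson radical, this construction would not produce a field topology.
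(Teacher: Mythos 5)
Your proof is correct and takes essentially the same route as the paper: the one substantive step, continuity of inversion at $1$, is handled exactly as in the paper via the non-zero Jacobson radical (Corollary~\ref{Jacobson}), shrinking a basic neighborhood into $J$ so that $1+x$ is a unit and $(1+x)^{-1}-1$ lands back in the given neighborhood. The only difference is that you verify the routine ring-topology axioms and the equivalence of the two neighborhood bases by hand, where the paper delegates this to \cite[Example~1.2]{PZ}.
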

\begin{proof}
  Everything follows formally by \cite[Example~1.2]{PZ}, except
  that we only get a \emph{ring} topology.  It remains to see that the
  map $x \mapsto 1/x$ is continuous.  It suffices to consider
  continuity around $x = 1$.  Let $I$ be a non-zero ideal in $R$.  We
  claim there is a non-zero ideal $I'$ such that if $x \in 1 + I'$,
  then $1/x \in 1 + I$.  Indeed, take $I' = I \cap J$, where $J$ is
  the Jacobson radical.  Suppose $x \in 1 + (I \cap J)$.  Then $x - 1$
  is in every maximal ideal, implying that $x$ is in no maximal
  ideals, so $x \in R^\times$.  Also, $x \in 1 + I$ implies that $1-x
  \in I$, and then $x^{-1}(1-x) \in I$, because $x$ is a unit.  But
  $x^{-1}(1 - x) = x^{-1} - 1$, and so $x^{-1} \in 1 + I$ as desired.
\end{proof}

\subsection{Localizations}
If $M$ is a structure, then $M^\Sh$ denotes the Shelah expansion of
$M$.  If $M$ is NIP, then the definable sets in $M^\Sh$ are exactly
the externally definable sets in $M$, and $M^\Sh$ is NIP
\cite[Proposition~3.23, Corollary~3.24]{NIPguide}.

Say that a collection of sets $\mathcal{C}$ is ``uniformly definable''
in a structure $M$ if $\mathcal{C} \subseteq \{X_a : a \in Y\}$ for
some definable family of sets $\{X_a\}_{a \in Y}$.
\begin{remark} \label{filtered-union}
  Let $M$ be a structure.  Suppose $D = \bigcup_{i \in I} D_i$ is a
  directed union, and the $D_i$ are uniformly definable in $M$.  Then
  $D$ is externally definable.  
\end{remark}
This is well-known in certain circles, but here is the proof for
completeness:
\begin{proof}
  Take some $L(M)$-formula $\phi(x,y)$ such that $D_i = \phi(M,b_i)$
  for some $b_i \in M^y$.  Let $\Sigma(y)$ be the partial type
  \begin{equation*}
    \{\phi(a,y) : a \in D\} \cup \{\neg \phi(a,y) : a \in M^x \setminus D\}.
  \end{equation*}
  Then $\Sigma(y)$ is finitely satisfiable, because for any
  $a_1,\ldots,a_n \in D$ and $e_1,\ldots,e_m \in M^x \setminus D$ we
  can find some $i$ such that $D_i \supseteq \{a_1,\ldots,a_n\}$,
  because the union is directed.  Then $D_i \subseteq D$, so $D_i \cap
  \{e_1,\ldots,e_m\} = \varnothing$.  Thus $b_i$ satisfies the
  relevant finite fragment of $\Sigma(y)$.  By compactness there is a
  realization $b$ of $\Sigma(y)$ in an elementary extension $N \succeq
  M$.  Then $\phi(M,b) = D$, by definition of $\Sigma(y)$, so $D$ is
  externally definable.
\end{proof}
\begin{lemma} \label{saturater}
  Let $R$ be a NIP ring.  Let $S$ be a multiplicative subset.  Then
  there is an externally definable multiplicative subset
  $\overline{S}$ such that the localization $S^{-1}R$ is isomorphic
  (as an $R$-algebra) to $\overline{S}^{-1}R$.
\end{lemma}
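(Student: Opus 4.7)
The plan is to let $\overline{S}$ be the \emph{saturation} of $S$, namely
\[
\overline{S} \;=\; \{r \in R : \exists r' \in R,\ rr' \in S\},
\]
i.e.\ the set of divisors in $R$ of elements of $S$. First I would check the algebraic content, which is standard and does not use NIP: $\overline{S}$ contains $S$; it is multiplicatively closed because if $r_1 r_1' = s_1 \in S$ and $r_2 r_2' = s_2 \in S$ then $(r_1 r_2)(r_1' r_2') = s_1 s_2 \in S$; and $\overline{S}^{-1}R \cong S^{-1}R$ as $R$-algebras, since every $r \in \overline{S}$ already becomes a unit in $S^{-1}R$ (if $rr' = s \in S$ then $r$ has inverse $r'/s$), so both rings satisfy the same universal property of inverting $S$.

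The real content of the lemma is showing that $\overline{S}$ is externally definable, and here I would invoke Remark~\ref{filtered-union}. For each $s \in R$ the set
\[
D_s \;=\; \{r \in R : \exists r' \in R,\ rr' = s\}
\]
is definable uniformly in the parameter $s$. Clearly $\overline{S} = \bigcup_{s \in S} D_s$. Since $S$ is multiplicatively closed, this union is directed: if $s_1, s_2 \in S$ then $s_1 s_2 \in S$ and $D_{s_1} \cup D_{s_2} \subseteq D_{s_1 s_2}$ (as $r \mid s_i$ implies $r \mid s_1 s_2$). Therefore $\overline{S}$ is a directed union of uniformly definable sets, hence externally definable by Remark~\ref{filtered-union}.

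I do not anticipate a genuine obstacle here: the only non-trivial input is Remark~\ref{filtered-union}, whose proof was already given. In fact NIP is not even used in the argument; presumably the hypothesis is included because the lemma will only be applied to NIP rings, in which case $\overline{S}$ becomes an honest definable subset of $R^{\Sh}$, which is what will be needed in the next step (interpreting $S^{-1}R$ in $R^{\Sh}$ to deduce Theorem~\ref{localizations}).
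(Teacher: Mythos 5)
Your proof is correct and matches the paper's argument essentially verbatim: the paper also takes $\overline{S}$ to be the set of divisors of elements of $S$, checks the isomorphism via the universal property of localization, and applies Remark~\ref{filtered-union} using the directedness $F_{xy} \supseteq F_x \cup F_y$ for $x,y \in S$. Your closing observation is also accurate — NIP is not needed for this lemma itself, only for the later conclusions about $R^{\Sh}$.
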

\begin{proof}
  For any $x \in R$, let $F_x$ denote the set of $y \in R$ such that
  $y|x$.  Let $\overline{S} = \bigcup_{x \in S} F_x$.  Note that if
  $A$ is a ring and $f : R \to A$ is a homomorphism, then the
  following are equivalent:
  \begin{itemize}
  \item $f(s)$ is invertible for every $s \in S$.
  \item $f(x)$ is invertible for $x,y,s$ with $xy = s$ and $s \in S$.
  \item $f(x)$ is invergible for $x,s$ with $x \in F_s$ and $s \in S$.
  \item $f(x)$ is invertible for $x \in \overline{S}$.
  \end{itemize}
  Therefore $S^{-1}R$ and $\overline{S}^{-1}R$ represent the same
  functor, and are isomorphic.

  It remains to see that $\overline{S}$ is externally definable.  This
  follows by Remark~\ref{filtered-union} because the sets $F_x$ are uniformly definable, and the
  union $\bigcup_{x \in S} F_x$ is a directed union.  Indeed, if $x, y
  \in S$, then $xy \in S$ and $F_{xy} \supseteq F_x \cup F_y$.
\end{proof}
\begin{theorem} \label{localizations}
  Let $R$ be a NIP ring.  Let $S$ be a multiplicative subset.  Then
  the localization $S^{-1}R$ and the homomorphism $R \to S^{-1}R$ are
  interpretable in $R^\Sh$.
\end{theorem}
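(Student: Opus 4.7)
The plan is to combine Lemma~\ref{saturater} with the standard construction of a localization as a quotient of pairs. By Lemma~\ref{saturater}, I may replace $S$ by an externally definable multiplicative subset $\overline{S}$ satisfying $S^{-1}R \cong \overline{S}^{-1}R$ as $R$-algebras; in particular, $\overline{S}$ is definable in $R^\Sh$, because the $R^\Sh$-definable sets are exactly the externally definable ones.

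Next I would realize $\overline{S}^{-1}R$ explicitly as the quotient of $R \times \overline{S}$ by the usual equivalence relation
\[
(r_1, s_1) \sim (r_2, s_2) \iff \exists t \in \overline{S} : t(r_1 s_2 - r_2 s_1) = 0.
\]
Because $\overline{S}$ is $R^\Sh$-definable, so are the domain $R \times \overline{S}$, the relation $\sim$, the ring operations $(r_1, s_1) + (r_2, s_2) = (r_1 s_2 + r_2 s_1,\, s_1 s_2)$ and $(r_1, s_1) \cdot (r_2, s_2) = (r_1 r_2,\, s_1 s_2)$, and the canonical map $r \mapsto [(r, 1)]$ from $R$ to the quotient. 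Composing with the isomorphism provided by Lemma~\ref{saturater} exhibits both $S^{-1}R$ and the localization homomorphism $R \to S^{-1}R$ as interpretable in $R^\Sh$, finishing the proof.

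The main (indeed only) obstacle is producing a multiplicative subset that is simultaneously large enough to give the correct localization and $R^\Sh$-definable, so that the existential quantifier $\exists t \in \overline{S}$ in the equivalence relation makes sense inside $R^\Sh$. That is precisely what Lemma~\ref{saturater} supplies: its saturation property $\overline{S} = \bigcup_{x \in S} F_x$ is simultaneously a directed union (giving external definability via Remark~\ref{filtered-union}) and closed enough under divisors that passing from $S$ to $\overline{S}$ leaves the localization unchanged. Once this is in hand, the verification is a routine transcription of the classical fraction construction into the language of interpretations.
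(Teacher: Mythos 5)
Your proposal is correct and follows the paper's argument exactly: the paper likewise reduces to Lemma~\ref{saturater} and then declares the interpretation ``clear,'' which is precisely the routine fraction construction you spell out. Writing out the equivalence relation on $R \times \overline{S}$ is just the elaboration of that step, so there is nothing to add.
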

\begin{proof}
  By Lemma~\ref{saturater}, we may replace $S$ with an externally
  definable set $\overline{S}$, and then the result is clear.
\end{proof}
\begin{corollary} \label{localize-nip}
  Let $R$ be a NIP ring.  Let $S$ be a multiplicative subset.  Then
  the localization $S^{-1}R$ is also NIP.
\end{corollary}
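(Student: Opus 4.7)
The plan is to deduce this immediately from Theorem~\ref{localizations} together with the standard preservation properties of NIP. By Theorem~\ref{localizations}, the localization $S^{-1}R$ is interpretable in the Shelah expansion $R^\Sh$. So I would invoke two well-known facts in sequence: first, since $R$ is NIP, the Shelah expansion $R^\Sh$ is NIP (this is \cite[Corollary~3.24]{NIPguide}, already cited in the excerpt); second, NIP is preserved under interpretations (and more generally under passing to structures interpretable in a NIP structure, since NIP is preserved by reducts and by naming sorts in $M^{\eq}$).

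Concretely, I would just write: by Theorem~\ref{localizations}, $S^{-1}R$ is interpretable in $R^\Sh$; since $R$ is NIP, so is $R^\Sh$; and any structure interpretable in a NIP structure is itself NIP; hence $S^{-1}R$ is NIP.

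There is essentially no obstacle here since all the work has already been done in Lemma~\ref{saturater} and Theorem~\ref{localizations}. The only thing one might want to double-check is that the notion of ``interpretable in $R^\Sh$'' used in Theorem~\ref{localizations} is the standard one (i.e., the domain is a definable quotient of a definable subset of some $(R^\Sh)^n$, and the ring operations are definable), so that the preservation of NIP under interpretation applies verbatim. This is immediate from the construction in the proof of Lemma~\ref{saturater}: the domain of $\overline{S}^{-1}R$ can be taken to be $R \times \overline{S}$ modulo the externally definable equivalence relation $(a,s) \sim (a',s') \iff \exists t \in \overline{S} : t(as' - a's) = 0$, with ring operations defined in the usual way, all definable in $R^\Sh$. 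So the conclusion follows with no further computation.
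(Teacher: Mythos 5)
Your proposal is correct and follows exactly the paper's own argument: the paper's proof is the one-liner that $S^{-1}R$ is interpretable in the NIP structure $R^\Sh$ (via Theorem~\ref{localizations}), with NIP-ness of $R^\Sh$ and preservation of NIP under interpretation doing the rest. Your extra remarks spelling out the interpretation of $\overline{S}^{-1}R$ are fine but not needed beyond what the paper already records.
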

\begin{proof}
  The localization $S^{-1}R$ is interpretable in the NIP structure
  $R^\Sh$.
\end{proof}
Corollary~\ref{localize-nip} generalizes part of
\cite[Proposition~2.8(2)]{halevi-delbee}, dropping the assumptions
that $S$ is externally definable and $R$ is integral.
\begin{proposition} \label{primes}
  Let $R$ be a NIP ring.  Let $\pp$ be a prime ideal in $R$.  Then
  $\pp$ is externally definable.
\end{proposition}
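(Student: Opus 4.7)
The plan is to pass to the localization $R_\pp$, use Corollary~\ref{def-max} to definably extract the maximal ideal $\pp R_\pp$ inside $R_\pp$, and then pull back along the interpretation $R \to R_\pp$ provided by Theorem~\ref{localizations}.

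More concretely, set $S = R \setminus \pp$, which is a multiplicative subset because $\pp$ is prime, and consider the localization $R_\pp = S^{-1}R$. This is a local ring whose unique maximal ideal is $\pp R_\pp$, and whose contraction to $R$ is exactly $\pp$. By Corollary~\ref{localize-nip}, $R_\pp$ is a NIP ring, so Corollary~\ref{def-max} applies and tells us that $\pp R_\pp$ is definable inside $R_\pp$ in the language of rings.

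Now I would invoke Theorem~\ref{localizations}: both the ring $R_\pp$ and the canonical homomorphism $f : R \to R_\pp$ are interpretable in the Shelah expansion $R^\Sh$. Since $\pp R_\pp$ is definable in $R_\pp$ and $\pp = f^{-1}(\pp R_\pp)$, the preimage $\pp$ is definable in $R^\Sh$. Because $R$ is NIP, the definable sets of $R^\Sh$ are precisely the externally definable subsets of $R$, so $\pp$ is externally definable, as required.

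There is essentially no obstacle here once the machinery of the previous subsection is in place; the entire content has been outsourced to Theorem~\ref{localizations} (which handles the passage to $R_\pp$ via externally definable data) and Corollary~\ref{def-max} (which makes the maximal ideal definable once we are in a local ring). The only point deserving a moment of care is the bookkeeping: that the map $R \to R_\pp$ is not merely a set-theoretic function but part of the interpretation, so that pulling back a definable subset of $R_\pp$ yields something definable in $R^\Sh$ on the original sort — but this is exactly the conclusion of Theorem~\ref{localizations}.
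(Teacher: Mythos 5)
Your proof is correct and follows essentially the same route as the paper: interpret $R \to R_\pp$ in $R^\Sh$ via Theorem~\ref{localizations} and pull back the maximal ideal of $R_\pp$. The only (harmless) difference is that you invoke Corollary~\ref{localize-nip} and Corollary~\ref{def-max} to make $\pp R_\pp$ definable, whereas the paper just observes that in the local ring $R_\pp$ the maximal ideal is the set of non-units, which is definable with no NIP input at all.
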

\begin{proof}
  By Theorem~\ref{localizations}, we can interpret $R \to R_\pp$
  in $R^{\Sh}$.  The maximal ideal of $R_\pp$ is definable in $R_\pp$,
  as the set of non-units.  It pulls back to $\pp$ in $R$.  Therefore
  $\pp$ is definable in $R^{\Sh}$, hence externally definable in $R$.
\end{proof}
Proposition~\ref{primes} generalizes a theorem of Halevi and
d'Elb\'ee, who proved that (certain) prime ideals in dp-minimal
domains are externally definable \cite[Lemma~3.3]{halevi-delbee}.
\begin{theorem} \label{radical}
  Let $R$ be a NIP ring.  Let $I$ be a radical ideal in $R$.  Then
  $I$ is externally definable.
\end{theorem}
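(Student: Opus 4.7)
The plan is to express $I$ as a \emph{finite} intersection of prime ideals and then invoke Proposition~\ref{primes}, using the fact that externally definable sets are exactly the sets definable in the structure $R^\Sh$ and so are closed under finite intersection.

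First I would recall the standard commutative algebra fact that a radical ideal equals the intersection of the minimal primes over it. More precisely, since $I$ is radical we have $I = \bigcap_{\pp \in V(I)} \pp$, and for each $\qq \in V(I)$, Zorn's lemma applied to $\{\pp \in \Spec R : I \subseteq \pp \subseteq \qq\}$ (ordered by reverse inclusion, using that intersections of chains of primes are prime) produces a minimal prime over $I$ contained in $\qq$. So $I$ already equals the intersection of the minimal primes over it.

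Next I would observe that two distinct minimal primes over $I$ are incomparable by minimality, so the set of minimal primes over $I$ is an antichain in $\Spec R$. By Fact~\ref{width}, $\Spec R$ has finite width, so this antichain is finite; write it as $\{\pp_1,\ldots,\pp_n\}$. Each $\pp_i$ is externally definable by Proposition~\ref{primes}, i.e.\ definable in $R^\Sh$, and $R^\Sh$ being a structure makes the finite intersection $I = \pp_1 \cap \cdots \cap \pp_n$ definable in $R^\Sh$ as well. Hence $I$ is externally definable in $R$.

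I do not anticipate a genuine obstacle: every ingredient is already available in the excerpt. The only place that requires a moment's thought is matching the set of minimal overprimes of $I$ against Fact~\ref{width}, which is immediate once one notices that minimality forces incomparability and hence antichain-ness.
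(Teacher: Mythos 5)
Your proof is correct, but it reaches the finite intersection of primes by a different decomposition than the paper. The paper invokes Corollary~\ref{dilworth} (Dilworth's theorem) to cover $\Spec R$ by finitely many chains $\mathcal{C}_1,\ldots,\mathcal{C}_n$, and for each chain takes $\pp_i$ to be the intersection of the primes in $\mathcal{C}_i$ containing $I$; since an intersection of a chain of primes is prime, this writes $I = \bigcap_i \pp_i$ with each $\pp_i$ prime and externally definable by Proposition~\ref{primes}. You instead bypass Dilworth entirely: you write $I$ as the intersection of the minimal primes over $I$ (existence of enough minimal overprimes by your Zorn argument, which uses the same chain-intersection-of-primes fact), note that these minimal overprimes are pairwise incomparable, hence form an antichain, and conclude finiteness directly from Fact~\ref{width}. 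Both routes end identically: a finite intersection of primes, each externally definable by Proposition~\ref{primes}, and closure of $R^\Sh$-definable sets under finite intersection. Your version is marginally more self-contained (no appeal to Dilworth's theorem) and identifies a canonical finite set of primes (the minimal ones over $I$), whereas the paper's version reuses machinery it had already set up and sidesteps any discussion of minimal primes; the only pedantic point in either argument is the degenerate case $I=R$, where the relevant family of primes is empty and $I$ is trivially definable.
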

\begin{proof}
  By Corollary~\ref{dilworth}, we can cover the set $\Spec R$ of prime
  ideals in $R$ with finitely many chains $\mathcal{C}_1, \ldots,
  \mathcal{C}_n$.  The ideal $I$ is an intersection of prime ideals.
  Let $\pp_i$ be the intersection of the prime ideals $\pp \in
  \mathcal{C}_i$ with $\pp \supseteq I$.  An intersection of a chain
  of prime ideals is prime, so $\pp_i$ is prime.  Then $I$ is a finite
  intersection $\bigcap_{i = 1}^n \pp_i$.  Each $\pp_i$ is externally
  definable by Proposition~\ref{primes}.
\end{proof}
\begin{corollary}
  Let $R$ be a NIP ring.  Let $I$ be a radical ideal.  The quotient
  $R/I$ is NIP.
\end{corollary}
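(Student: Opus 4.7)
The plan is to reduce this to the already-established external definability of $I$ together with the fact that the Shelah expansion of a NIP structure is NIP. By Theorem~\ref{radical}, the radical ideal $I$ is externally definable in $R$, hence definable in $R^\Sh$. The additive congruence relation $x \sim y \iff x - y \in I$ is then definable in $R^\Sh$, and the quotient ring $R/I$, equipped with the ring operations induced from $R$, is interpretable in $R^\Sh$ via this equivalence relation.

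Next, I would invoke the fact (recalled at the start of Section~2.2, citing \cite[Proposition~3.23, Corollary~3.24]{NIPguide}) that $R^\Sh$ is NIP. Since NIP is preserved under interpretation, $R/I$ is NIP.

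There is really no obstacle here: the work has already been done in proving Theorem~\ref{radical}. The only thing to double-check is that the interpretation of $R/I$ in $R^\Sh$ goes through as a \emph{ring} interpretation, not merely an interpretation of the underlying set. This is immediate: the graphs of addition and multiplication on $R/I$ are the images of the graphs of addition and multiplication on $R$ under the externally definable quotient map, and equality on $R/I$ is the externally definable relation $\sim$. Thus all the defining data of $R/I$ as an $L_{\mathrm{ring}}$-structure is definable in $R^\Sh$.
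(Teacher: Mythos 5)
Your proof is correct and is essentially the paper's own argument: the paper likewise cites Theorem~\ref{radical} to get $I$ definable in $R^\Sh$ and then observes that $R/I$ is interpretable in the NIP structure $R^\Sh$. Your extra remarks on the ring interpretation just spell out details the paper leaves implicit.
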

\begin{proof}
  The quotient $R/I$ is interpretable in the NIP structure $R^{\Sh}$.
\end{proof}

\subsection{Automatic connectedness}
If $G$ is a definable or type-definable group, then $G^{00}$ is the
smallest type-definable group of bounded index in $G$.  In a NIP
context, $G^{00}$ always exists, and is type-definable over whatever
parameters define $G$ \cite[Proposition 6.1]{goo}
\begin{proposition} \label{connecter}
  Let $R$ be a NIP ring.  Suppose that $R/\mm$ is infinite for every
  maximal ideal $\mm$ of $R$.
  \begin{enumerate}
  \item \label{first} If $I$ is a definable ideal of $R$, then $I = I^{00}$.
  \item \label{second} If $R$ is a domain and $K = \Frac(R)$ and if
    $I$ is a definable $R$-submodule of $K$, then $I = I^{00}$.
  \end{enumerate}
  In particular, in either case, $I$ has no definable proper subgroups
  of finite index.
\end{proposition}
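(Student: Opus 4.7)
The plan is to handle both parts uniformly by a cardinality contradiction in a sufficiently saturated monster, exploiting that in NIP the index $[I:I^{00}]$ is bounded (strictly less than the saturation cardinal) while every infinite interpretable set has cardinality at least the saturation cardinal.

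First I would check that $I^{00}$ is an $R$-submodule of $I$. For each $r \in R$, multiplication by $r$ is a definable group endomorphism of $I$, so $r^{-1}(I^{00})$ is type-definable, and the map induced by $r$ embeds $I/r^{-1}(I^{00})$ into $I/I^{00}$, so $r^{-1}(I^{00})$ has bounded index in $I$. Minimality of $I^{00}$, together with the fact that $G^{00}$ in NIP does not depend on the parameters over which it is type-defined, forces $r^{-1}(I^{00}) \supseteq I^{00}$, hence $rI^{00} \subseteq I^{00}$. So $M := I/I^{00}$ inherits an $R$-module structure.

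Next, I would assume $M \ne 0$ for contradiction, pick $\bar x \in M \setminus \{0\}$, and set $A = \Ann_R(\bar x)$. The cyclic submodule $R \bar x \cong R/A$ has cardinality at most $|M|$, which is bounded, i.e., strictly less than the saturation cardinal of the monster. Since $R/A$ is non-zero, Zorn supplies a maximal ideal of $R/A$, which lifts to a maximal ideal $\mm \supseteq A$ of $R$, and $R/\mm$ is a quotient of $R/A$, hence also has bounded cardinality. But $\mm$ is definable by Corollary~\ref{def-max}, so $R/\mm$ is interpretable in $R$, and by hypothesis it is infinite; saturation then forces $|R/\mm|$ to equal the saturation cardinal, a contradiction. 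Hence $M = 0$, i.e., $I = I^{00}$.

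This argument works verbatim for part~\ref{second}; one only needs a definable $R$-action on $I$, which is obtained by working in an expansion such as $R^{\Sh}$ in which $K = \Frac(R)$ is interpretable (Theorem~\ref{localizations}). The ``in particular'' clause is immediate: any proper definable subgroup of finite index in $I$ would be a type-definable subgroup of bounded index properly contained in $I$, contradicting $I = I^{00}$. The main thing to be careful about is arranging the monster to be saturated enough that $|I/I^{00}|$ is strictly smaller than the cardinality of any infinite interpretable set; this is the crux of the argument, and the only place where the infinite-residue-field hypothesis is used.
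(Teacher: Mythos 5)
Your proof is correct and follows essentially the same route as the paper: show $I^{00}$ is an $R$-submodule via preimages of bounded index, then observe that the bounded (small) $R$-module $I/I^{00}$ would otherwise have a simple subquotient $R/\mm$, which is interpretable (by Corollary~\ref{def-max}) and infinite, hence of unbounded size in a saturated model. The only cosmetic difference is that for part~\ref{second} you pass to $R^{\Sh}$, which is unnecessary since $K=\Frac(R)$ is already interpretable in $R$ itself.
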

\begin{proof}
  We may assume $R$ is a monster model, i.e., $\kappa$-saturated for
  some big cardinal $\kappa$.  ``Small'' will mean ``cardinality less
  than $\kappa$'', and ``large'' will mean ``not small.''

  Let $\mm_1,\ldots,\mm_n$ be the maximal ideals of $R$.  By
  Corollary~\ref{fin-man} there are only finitely many, and by
  Corollary~\ref{def-max} they are all definable.  The quotients
  $R/\mm_i$ are infinite, hence large.  Therefore every simple
  $R$-module is large.  Every non-trivial $R$-module has a simple
  subquotient, so every non-trivial $R$-module is large.

  Now suppose $I$ is a definable ideal.  If $a \in R$, then the map $I
  \to I$ sending $x$ to $ax$ must map $I^{00}$ into $I^{00}$.  Indeed,
  if we let $J = \{x \in I : ax \in I^{00}\}$, then $J$ is a
  type-definable subgroup of $I$ of bounded index, so $J \supseteq
  I^{00}$.  Thus we see that for any $a \in R$, we have $aI^{00}
  \subseteq I^{00}$.  In other words, $I^{00}$ is an ideal.  The
  quotient $I/I^{00}$ is an $R$-module.  By definition of $G^{00}$,
  the quotient $I/I^{00}$ is small.  We saw that non-trivial
  $R$-modules are large, so $I/I^{00}$ must be trivial, implying $I =
  I^{00}$.  This proves (\ref{first}), and (\ref{second}) is similar.
\end{proof}

\section{NIP $\Ff_p$-algebras} \label{sec2}
\subsection{A variant of the Kaplan-Scanlon-Wagner theorem}
In \cite[Theorem~4.4]{NIPfields}, Kaplan, Scanlon, and Wagner show
that if $K$ is an infinite NIP field of characteristic $p > 0$, then
the Artin-Schreier map $x \mapsto x^p - x$ is a surjection from $K$
onto $K$.  The same idea can be applied to certain local rings, as we
will see in Theorem~\ref{ksw++} below.

Before proving the theorem, we need some (well-known) lemmas on
additive polynomials.  Fix a field $K$ of characteristic $p$.  If $c
\in K$, define
\begin{equation*}
  g_c(x) = x^p - c^{p-1}x.
\end{equation*}
The polynomial $g_c(x)$ defines an additive homomorphism from $K$ to
$K$.  If $V$ is a finite-dimensional $\Ff_p$-linear subspace of $K$ (i.e., a finite subgroup of $(K,+)$), define
\begin{equation}
  f_V(x) = \prod_{a \in V}(x-a).
\end{equation}
We will see shortly that $f_V$ is an additive homomorphism.
\begin{lemma}\label{lam1}
  If $c \in K$ is non-zero, then $g_c(x) = f_{\Ff_p \cdot c}(x)$.  In
  particular, $f_{\Ff_p \cdot c}(x)$ is an additive homomorphism.
\end{lemma}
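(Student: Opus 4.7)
The plan is a direct computation: show that both polynomials are monic of degree $p$ and share the same $p$ roots, namely the elements of $\Ff_p \cdot c$. Additivity of $g_c$ is then immediate from its explicit formula, and this transfers to $f_{\Ff_p \cdot c}$ via the equality $g_c = f_{\Ff_p \cdot c}$.

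More concretely, first observe that $f_{\Ff_p \cdot c}(x) = \prod_{i \in \Ff_p}(x - ic)$ is monic of degree $p$, with roots precisely the $p$ distinct elements $0, c, 2c, \ldots, (p-1)c$ (distinct since $c \ne 0$ and $\characteristic K = p$). The polynomial $g_c(x) = x^p - c^{p-1} x$ is also monic of degree $p$. Next I would evaluate $g_c$ at each $ic$ for $i \in \Ff_p$:
\begin{equation*}
g_c(ic) = (ic)^p - c^{p-1}(ic) = i^p c^p - i c^p = (i^p - i) c^p = 0,
\end{equation*}
using Fermat's little theorem $i^p = i$ in $\Ff_p$. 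Hence $g_c$ has at least $p$ distinct roots, so being monic of degree $p$, it must equal $\prod_{i \in \Ff_p}(x - ic) = f_{\Ff_p \cdot c}(x)$.

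For the ``in particular'' clause, note that $g_c(x) = x^p - c^{p-1} x$ is visibly an additive homomorphism: $x \mapsto x^p$ is the Frobenius (additive in characteristic $p$) and $x \mapsto c^{p-1} x$ is $K$-linear, hence additive. Since $f_{\Ff_p \cdot c} = g_c$, it too is an additive homomorphism. The only subtle point is ensuring that the roots $0, c, \ldots, (p-1)c$ are genuinely distinct, which requires $c \ne 0$; this is exactly the hypothesis. No real obstacle is anticipated, as the entire argument is a standard characteristic-$p$ manipulation.
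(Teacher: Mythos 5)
Your proof is correct and follows essentially the same route as the paper: both identify $g_c$ and $f_{\Ff_p \cdot c}$ as monic polynomials of degree $p$ sharing the $p$ roots $\Ff_p \cdot c$, and then transfer additivity from the explicit formula for $g_c$. The only cosmetic difference is that you verify each root $ic$ by direct computation with Fermat's little theorem, whereas the paper checks only $g_c(c)=0$ and uses the additivity of $g_c$ to conclude that its kernel contains the subgroup $\Ff_p \cdot c$; both are fine.
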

\begin{proof}
  Note that $g_c(c) = 0$.  Therefore, $\ker g_c$ contains the subgroup
  generated by $c$, which is $\Ff_p \cdot c$.  Since $g_c$ is monic of
  degree $p$, and $|\Ff_p \cdot c| = p$, we must have
  \begin{equation*}
    g_c(x) = \prod_{a \in \Ff_p \cdot c} (x - a) = f_{\Ff_p \cdot
      c}(x). \qedhere
  \end{equation*}
\end{proof}
\begin{lemma}\label{lam2}
  Suppose $V_1 \subseteq V_2$ are finite-dimensional subspaces of $K$,
  with $\dim V_2 = \dim V_1 + 1$.  Suppose $f_{V_1}$ is an additive
  homomorphism on $K$.  Then there is $c \in f_{V_1}(V_2)$ such that
  $f_{V_2} = g_c \circ f_{V_1}$, and in particular $f_{V_2}$ is an
  additive homomorphism on $K$.
\end{lemma}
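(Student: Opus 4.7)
The plan is to exploit the fact that $\dim V_2 = \dim V_1 + 1$ by picking a complementary element and partitioning $V_2$ into cosets of $V_1$. Concretely, I would fix some $b \in V_2 \setminus V_1$. Since $V_2/V_1$ has dimension $1$ over $\Ff_p$, every element of $V_2$ can be written uniquely as $a + ib$ with $a \in V_1$ and $i \in \Ff_p$. This yields the factorization
\begin{equation*}
  f_{V_2}(x) \;=\; \prod_{i \in \Ff_p} \prod_{a \in V_1} (x - ib - a) \;=\; \prod_{i \in \Ff_p} f_{V_1}(x - ib).
\end{equation*}

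Next, I would use the hypothesis that $f_{V_1}$ is an additive homomorphism to move the $-ib$ outside: $f_{V_1}(x - ib) = f_{V_1}(x) - i f_{V_1}(b)$. Setting $c = f_{V_1}(b) \in f_{V_1}(V_2)$, the product becomes
\begin{equation*}
  f_{V_2}(x) \;=\; \prod_{i \in \Ff_p} \bigl(f_{V_1}(x) - ic\bigr) \;=\; \prod_{a \in \Ff_p \cdot c} \bigl(f_{V_1}(x) - a\bigr) \;=\; f_{\Ff_p \cdot c}\bigl(f_{V_1}(x)\bigr).
\end{equation*}
By Lemma~\ref{lam1}, this equals $g_c(f_{V_1}(x))$, giving $f_{V_2} = g_c \circ f_{V_1}$, which is manifestly an additive homomorphism as a composition of two such.

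The only subtlety is that Lemma~\ref{lam1} requires $c \neq 0$. But $c = f_{V_1}(b) = 0$ would force $b$ to be a root of the polynomial $\prod_{a \in V_1}(x - a)$, i.e., $b \in V_1$, contradicting the choice of $b \in V_2 \setminus V_1$. So $c \neq 0$ and the argument goes through. I do not foresee any real obstacle here; the lemma is essentially a bookkeeping exercise once one writes $V_2$ as $V_1 \oplus \Ff_p \cdot b$ and uses additivity of $f_{V_1}$ to commute it past a translation.
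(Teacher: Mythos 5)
Your argument is correct, but it verifies the identity by a different route than the paper. You decompose $V_2$ into the cosets $V_1 + ib$, factor $f_{V_2}(x) = \prod_{i \in \Ff_p} f_{V_1}(x - ib)$, use additivity of $f_{V_1}$ to rewrite this as $\prod_{i \in \Ff_p}\bigl(f_{V_1}(x) - ic\bigr) = f_{\Ff_p \cdot c}\bigl(f_{V_1}(x)\bigr)$, and then invoke Lemma~\ref{lam1}; your check that $c = f_{V_1}(b) \neq 0$ is exactly what that lemma requires. The paper works in the opposite direction: it sets $h = g_c \circ f_{V_1}$ with $c = f_{V_1}(a)$ for $a \in V_2 \setminus V_1$, observes that $h$ is additive and vanishes on $V_1$ and at $a$, hence on all of $V_2$, and then compares degrees ($h$ is monic of degree $p^{d+1}$ vanishing at the $p^{d+1}$ distinct points of $V_2$, so $h = f_{V_2}$); Lemma~\ref{lam1} is not used there. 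Your computation is more explicit about where the composite comes from, while the paper's root-and-degree count is slightly more robust on one small point: the hypothesis that $f_{V_1}$ is an additive homomorphism is a statement about the function $K \to K$, whereas your step $f_{V_1}(x - ib) = f_{V_1}(x) - i f_{V_1}(b)$ is used as an identity of polynomials, which is what the lemma must deliver (it is later evaluated at points of $K^{alg}$ in Theorem~\ref{ksw++}). If $K$ is infinite this passage from functions to polynomials is automatic, and in general it follows from a one-line count (both sides are monic of degree $p^{\dim V_1}$ in $x$, so their difference has degree less than $p^{\dim V_1} \le |K|$ and vanishes at every point of $K$, hence is zero), but it deserves a word; the paper's argument sidesteps it by comparing polynomials throughout.
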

\begin{proof}
  Take $a \in V_2 \setminus V_1$ and let $c = f_{V_1}(a)$.  Let $h =
  g_c \circ f_{V_1}$.  Then $h$ is an additive homomorphism on $K$,
  and it suffices to show that $h = f_{V_2}$.  Note that if $x \in
  V_1$, then $h(x) = g_c(f_{V_1}(x)) = g_c(0) = 0$, since $f_{V_1}$
  vanishes on $V_1$.  Additionally, $h(a) = g_c(f_{V_1}(a)) = g_c(c) =
  0$.  Therefore the kernel of $h$ contains $V_1$ as well as $a$.  It
  therefore contains the group they generate, which is $V_1 + \Ff_p
  \cdot a = V_2$.  If $d = \dim V_1$, then $|V_1| = p^d$ and $|V_2| =
  p^{d+1}$.  The polynomial $f_{V_1}$ is a monic polynomial of degree
  $p^d$, and $g_c$ is a monic polynomial of degree $p$.  Therefore the
  composition $h$ is a monic polynomial of degree $p^{d+1}$.  We have
  just seen that $h$ vanishes on the set $V_2$ of size $p^{d+1}$, so
  $h(x)$ must be $\prod_{u \in V_2} (x - u) = f_{V_2}(x)$.
\end{proof}
\begin{lemma}\label{lam3}
  If $V$ is a finite-dimensional subspace of $K$, then $f_V$ is an
  additive homomorphism with kernel $V$.
\end{lemma}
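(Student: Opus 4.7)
The plan is to prove Lemma~\ref{lam3} by induction on $d = \dim V$, using Lemmas~\ref{lam1} and~\ref{lam2} as the engines of the induction. The base case $d = 0$ is trivial: then $V = \{0\}$ and $f_V(x) = x$, which is additive with kernel $\{0\} = V$. (Alternatively, one can start from $d = 1$, where Lemma~\ref{lam1} handles additivity directly.)

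For the inductive step, suppose the lemma holds for all subspaces of dimension $d$, and let $V$ have dimension $d+1$. Choose any subspace $V_1 \subseteq V$ with $\dim V_1 = d$. By the inductive hypothesis, $f_{V_1}$ is an additive homomorphism on $K$. Lemma~\ref{lam2} then applies to the pair $V_1 \subseteq V$ and yields a constant $c \in K$ with $f_V = g_c \circ f_{V_1}$, in particular showing that $f_V$ is a composition of additive maps and hence additive.

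It remains to identify the kernel. By construction $f_V$ vanishes on $V$, so $V \subseteq \ker f_V$. Conversely, $f_V(x) = \prod_{a \in V}(x-a)$ is a nonzero polynomial in $K[x]$ of degree $|V|$, so it has at most $|V|$ roots in $K$; since $V$ already provides $|V|$ distinct roots, we conclude $\ker f_V = V$.

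I do not anticipate a serious obstacle, since the two preceding lemmas have been carefully set up precisely to drive this induction; the only mild point to be careful about is that the choice of codimension-one subspace $V_1$ is harmless, because Lemma~\ref{lam2} does not depend on which $V_1$ we pick. The kernel bound uses only the standard fact that a nonzero polynomial of degree $n$ over a field has at most $n$ roots.
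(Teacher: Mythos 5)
Your proposal is correct and follows essentially the same route as the paper: additivity by induction on $\dim V$ via Lemma~\ref{lam2}, and the kernel identified because the elements of $V$ already supply all $|V| = \deg f_V$ roots. The only difference is that you spell out the base case and the degree-count argument, which the paper leaves as immediate.
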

\begin{proof}
  The fact that $f_V$ is an additive homomorphism follows by induction
  on $\dim V$ using Lemma~\ref{lam2}.  The fact that $\ker f_V = V$ is
  immediate from the definition of $f_V$.
\end{proof}
We now can prove our desired theorem on NIP local domains in positive
characteristic:
\begin{theorem} \label{ksw++}
  Let $p > 0$ be a prime.  Let $R$ be a NIP $\Ff_p$-algebra with the
  following properties: $R$ is a local ring, $R$ is an integral domain
  with maximal ideal $\mm$, and the quotient field $k = R/\mm$ is
  infinite.  Then $x \mapsto x^p - x$ is a surjection from $R$ onto
  $R$.
\end{theorem}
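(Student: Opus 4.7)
The plan is to reduce surjectivity of $x \mapsto x^p - x$ on $R$ to the original Kaplan-Scanlon-Wagner theorem applied to the residue field, and then to lift roots back up via Hensel's lemma. Specifically, I would first invoke Theorem~\ref{one-hens} to conclude that $R$ is a Henselian local domain. Next I would observe that the residue field $k = R/\mm$ is an infinite field of characteristic $p$ that is interpretable in $R$ (since $\mm$ is definable by Corollary~\ref{def-max}), and therefore NIP; so by \cite[Theorem~4.4]{NIPfields} the map $x \mapsto x^p - x$ is surjective on $k$.

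Given $c \in R$ with residue $\bar c \in k$, choose $\bar a \in k$ with $\bar a^p - \bar a = \bar c$. In $k[x]$ we then have
\[
  x^p - x - \bar c \;=\; \prod_{i \in \Ff_p} (x - \bar a - i),
\]
a product of $p$ pairwise distinct linear factors, so $\bar a$ is a simple root of the reduction of $f(x) := x^p - x - c \in R[x]$. The coprime factorization $(x - \bar a) \cdot \prod_{i \in \Ff_p^\times}(x - \bar a - i)$ of $\bar f(x)$ then lifts, via Hensel's lemma in the Henselian ring $R$, to a factorization $f(x) = (x - a) \cdot h(x)$ in $R[x]$ with $a \in R$ reducing to $\bar a$. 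In particular $a$ is a root of $f$, so $a^p - a = c$, and surjectivity on $R$ follows.

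The main caveat is the logical ordering within Section~\ref{sec2}: since the present subsection on Artin-Schreier precedes the henselianity results, invoking Theorem~\ref{one-hens} here amounts to a forward reference. If one wishes to avoid this, one must replace the appeal to henselianity by a direct KSW-style argument inside $R$. Presumably this is the role of the additive polynomials $f_V$ in Lemmas~\ref{lam1}--\ref{lam3}: given $c \in R$ not in the image of $x \mapsto x^p - x$, one would use the $f_V$ (for $V$ an $\Ff_p$-subspace living in an iterated Artin-Schreier-type extension of $\Frac(R)$) to configure an IP pattern on a formula definable in $R$, contradicting NIP in the manner of Kaplan, Scanlon, and Wagner. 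Whenever Theorem~\ref{one-hens} is available, however, the Hensel-based reduction above is both shorter and more transparent.
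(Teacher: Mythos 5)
Your main route is circular, not merely a ``forward reference.'' In this paper, Theorem~\ref{one-hens} is obtained from Proposition~\ref{hensel-wow}, which rests on Lemma~\ref{good-times}, hence on Corollary~\ref{forest} and Theorem~\ref{linear}, hence on Lemmas~\ref{step-2} and~\ref{step-1}; and the proof of Lemma~\ref{step-1} (Claim~\ref{claim-pre}) explicitly applies Theorem~\ref{ksw++} to the localizations $R_{\mm_1}$ and $R_{\mm_2}$. So every route to henselianity in the paper already passes through the very statement you are trying to prove, and no reordering of the sections can repair this: Theorem~\ref{ksw++} must be proved without assuming $R$ is Henselian. (Your lifting step itself is fine as mathematics: $\mm$ is definable by Corollary~\ref{def-max}, so $k$ is interpretable in $R$ and NIP, Kaplan--Scanlon--Wagner gives surjectivity on $k$, and since $x^p-x-c$ is monic with only simple roots modulo $\mm$, Hensel's lemma in a Henselian local ring would lift a residue root to $R$. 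The problem is purely that henselianity is not available at this point.)

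Your fallback paragraph correctly guesses that Lemmas~\ref{lam1}--\ref{lam3} are meant to support a direct KSW-style argument inside $R$, but it is only a gesture, and its description is off in one respect: the paper does not exhibit an IP pattern directly. Instead it applies the Baldwin--Saxl lemma to the family of additive subgroups $g_c(R)=\{x^p-c^{p-1}x : x\in R\}$, lifts $\Ff_p$-independent elements $\alpha_1,\ldots,\alpha_n$ of the infinite residue field to units $a_1,\ldots,a_n\in R^\times$, builds the additive polynomials $f_S=f_{V_S}$ for $S\subseteq[n]$ with $g_{S,i}\circ f_S=f_{S\cup i}$, and then, starting from a root $b_\varnothing\in K^{alg}$ of $f_1(x)=b_1$, uses the redundancy in $\bigcap_i g_{[n]\setminus i,i}(R)$ together with the unit $c_{S,i}^{p-1}-c_{S,j}^{p-1}$ (this is where the infinitude of $k$ and locality of $R$ enter, via Claim~\ref{claim2}) to descend and conclude $b_\varnothing\in R$. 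None of this is reconstructed in your proposal, so as written the proof has a genuine gap: the only complete argument you give depends on a theorem that in turn depends on Theorem~\ref{ksw++}.
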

\begin{proof}
  Let $K = \Frac(R)$.  Note that if $V$ is a finite-dimensional
  $\Ff_p$-subspace of $R$, then $f_V(x) \in R[x]$, and if $c \in
  R$, then $g_c(x) \in R[x]$.
  \begin{claim} \label{claim1}
    It suffices to find $c \in R^\times$ such that $g_c(x)$ is a
    surjection from $R$ to $R$.
  \end{claim}
  \begin{claimproof}
    Note that $c^{-p}g_c(cx) = c^{-p}(c^px^p - c^{p-1}cx) = x^p - x$.
    The maps $x \mapsto cx$ and $x \mapsto c^{-p}x$ are bijections on
    $R$, so if $g_c$ is surjective then so is $g_1(x) = x^p - x$.
  \end{claimproof}
  For any $c \in R$, the polynomial $g_c(x)$ defines an additive map
  $R \to R$, whose image $g_c(R)$ is an additive subgroup of $R$.  Let
  $\mathcal{G} = \{g_c(R) : c \in R\}$.  By the Baldwin-Saxl theorem
  for NIP groups, there is some integer $n$ such that if $G_1, \ldots,
  G_n \in \mathcal{G}$, then there is some $i$ such that \[G_i
  \supseteq G_1 \cap \cdots \cap G_{i-1} \cap G_{i+1} \cap \cdots \cap
  G_n.\] Fix such an $n \ge 2$.

  The residue field $k$ is infinite, so we can find $\Ff_p$-linearly
  independent $\alpha_1,\ldots,\alpha_n \in k$.  Take $a_i \in R$
  lifting $\alpha_i \in k$.  Note $\alpha_i \ne 0$, so $a_i \notin
  \mm$, and therefore $a_i \in R^\times$.  Also note that the elements
  $\{a_1,\ldots,a_{n-1}\}$ are $\Ff_p$-linearly independent in $K$.

  Let $[n] = \{1,\ldots,n\}$.  If $S \subseteq [n]$ and $i \in [n]$,
  we write $S \cup i$ and $S \setminus i$ as abbreviations for $S \cup
  \{i\}$ and $S \setminus \{i\}$.  Even worse, we sometimes abbreviate
  $\{i\}$ as $i$.

  For $S \subseteq [n]$, let $V_S$ be the
  $\Ff_p$-linear span of $\{a_i : i \in S\}$.  Then $V_S$ has
  dimension $|S|$.  Let
  \begin{equation*}
    f_S(x) := f_{V_S}(x) = \prod_{a \in V_S} (x - a).
  \end{equation*}
  This is a monic polynomial in $R[x]$.  By Lemma~\ref{lam3} $f_S(x)$
  induces an additive homomorphism $K \to K$, and therefore an
  additive homomorphism $R \to R$.

  Note that $f_i(x) = f_{V_i}(x) = f_{\Ff_p \cdot a_i}(x) =
  g_{a_i}(x)$ by Lemma~\ref{lam1}.  By Claim~\ref{claim1}, it
  suffices to show that $f_i$ is a surjection from $R$ to $R$, for
  at least one $i$.
    
  If $S \subseteq [n]$ and $i \in [n] \setminus S$, then $V_{S \cup
    i}$ has dimension one more than $V_S$.  By Lemma~\ref{lam2}, there
  is some $c_{S,i} \in f_S(V_{S \cup i})$ such that $g_{c_{S,i}} \circ
  f_S = f_{S \cup i}$.  Let $g_{S,i} := g_{c_{S,i}}$.  Then
  \begin{equation*}
    g_{S,i} \circ f_S = f_{S \cup i}.
  \end{equation*}
  Now $c_{S,i} \in f_S(V_{S \cup i})$, but $f_S(x) \in R[x]$ and $V_{S
    \cup i} \subseteq R$.  Therefore $c_{S,i} \in R$, and $g_{S,i}(x)
  \in R[x]$.
  \begin{claim} \label{claim2}
    If $S \subseteq [n]$ and $i, j$ are distinct elements of $[n]
    \setminus S$, then $c_{S,i}^{p-1} - c_{S,j}^{p-1} \notin \mm$.
  \end{claim}
  \begin{claimproof}
    Otherwise, the two polynomials $g_{S,i}(x)$ and $g_{S,j}(x)$ have
    the same reduction modulo $\mm$.  From the identities $f_{S \cup
      i} = g_{S,i} \circ f_S$ and $f_{S \cup j} = g_{S,j} \circ f_S$,
    it follows that $f_{S \cup i} \equiv f_{S \cup j} \pmod{\mm}$.
    Let $V'_S$ be the $\Ff_p$-linear span of $\{\alpha_i : i \in S\}$,
    or equivalently, the image of $V_S$ under $R \to R/\mm$.  By
    inspection, the reduction of $f_S$ modulo $\mm$ is $\prod_{u \in
      V'_S} (x - u)$.  Since $V'_{S \cup i} \ne V'_{S \cup j}$, it
    follows immediately that $f_{S \cup i}$ and $f_{S \cup j}$ cannot
    have the same reduction modulo $\mm$, a contradiction.
  \end{claimproof}
  Each of the groups $g_{[n] \setminus i, i}(R)$ is in the family
  $\mathcal{G}$.  By choice of $n$, one of the factors in the
  intersection $\bigcap_{i = 1}^n g_{[n] \setminus i, i}(R)$ is
  irrelevant.  Without loss of generality, it is the first factor:
  \begin{equation}
    g_{[n] \setminus 1, 1}(R) \supseteq \bigcap_{i = 2}^n g_{[n]
      \setminus i, i}(R). \label{call-1}
  \end{equation}
  We claim that $f_1(x)$ defines a surjection from $R$ to $R$.  As
  $f_1(x) = g_{a_1}(x)$, this suffices, by Claim~\ref{claim1}.

  Take some $b_1 \in R$.  It suffices to show that $b_1 \in f_1(R)$.
  Take some $b_\varnothing \in K^{alg}$ such that $f_1(b_\varnothing)
  = b_1$.  It suffices to show that $b_\varnothing \in R$.  For $S
  \subseteq [n]$, define $b_S = f_1(b_\varnothing) \in K^{alg}$.
  (When $S = \{1\}$ this recovers $b_1$, and when $S = \varnothing$
  this recovers $b_\varnothing$, to the notation is consistent.)  Note
  that
  \begin{equation}
    g_{S,i}(b_S) = g_{S,i}(f_S(b_\varnothing)) = f_{S \cup
      i}(b_\varnothing) = b_{S \cup i}. \label{call-z}
  \end{equation}
  \begin{claim} \label{claim-w}
    If $1 \in S \subseteq [n]$, then $b_S \in R$.
  \end{claim}
  \begin{claimproof}
    Take a minimal counterexample $S$.  If $S = \{1\}$, then $b_S =
    b_1 \in R$.  Otherwise, take $i \in S \setminus 1$ and let $S_0 =
    S \setminus i$.  By choice of $S$, we have $b_{S_0} \in R$.  Then
    $b_S = g_{S_0,i}(b_{S_0})$.  But $g_{S_0,i}(x) \in R[x]$, so $b_S
    \in R$.
  \end{claimproof}
  In particular, $b_S \in R$ for $S = [n]$, as well as $S = [n]
  \setminus i$ for $i > 1$.  Then
  \begin{equation*}
    b_{[n]} = g_{[n] \setminus i, i}(b_{[n] \setminus i}) \in g_{[n] \setminus i, i}(R)
  \end{equation*}
  for $1 < i \le n$.  By (\ref{call-1}), $b_{[n]} \in g_{[n] \setminus
    1, 1}(R)$.  Take $v \in R$ such that $g_{[n] \setminus 1, 1}(v) =
  b_{[n]}$.  Then $g_{[n] \setminus 1, 1}(v) = b_{[n]} = g_{[n]
    \setminus 1, 1}(b_{[n] \setminus 1})$, and so
  \begin{equation*}
    v - b_{[n] \setminus 1} \in \ker g_{[n] \setminus 1, 1} = \Ff_p
    \cdot c_{[n] \setminus 1, 1} \subseteq R.
  \end{equation*}
  Therefore $b_{[n] \setminus 1} \in R$.  So we see that
  \begin{equation}
    b_{[n] \setminus i} \in R \text{ for all } 1 \le i \le n. \label{call-2}
  \end{equation}
  \begin{claim}
     $b_\varnothing \in R$.
  \end{claim}
  \begin{claimproof}
    Suppose otherwise.  Take $S$ maximal such that $b_S \notin R$.  By
    Claim~\ref{claim-w} and (\ref{call-2}), $S$ is neither $[n]$ nor
    $[n] \setminus i$ for $1 \le i \le n$.  Therefore $[n] \setminus
    S$ contains at least two elements $i, j$.  By choice of $S$, we
    have $b_{S \cup i} \in R$ and $b_{S \cup j} \in R$.  By
    (\ref{call-z}),
    \begin{gather*}
      b_{S \cup i} = g_{S,i}(b_S) = b_S^p - c_{S,i}^{p-1} b_S \\
      b_{S \cup j} = g_{S,j}(b_S) = b_S^p - c_{S,j}^{p-1} b_S.
    \end{gather*}
    Therefore
    \begin{equation*}
      (c_{S,i}^{p-1} - c_{S,j}^{p-1}) b_S = b_{S \cup j} - b_{S \cup i} \in R.
    \end{equation*}
    By Claim~\ref{claim2}, $c_{S,i}^{p-1} - c_{S,j}^{p-1} \in R
    \setminus \mm = R^\times$, and so $b_S \in R$ a contradiction.
  \end{claimproof}
  This completes the proof.  We see that $b_\varnothing \in R$, and so
  $b_1 = f_1(b_\varnothing) \in f_1(R)$.  As $b_1$ was an arbitrary
  element of $R$, it follows that $f_1$ gives a surjection from $R$ to
  $R$.  But $f_1(x) = g_{a_1}(x)$, and $a_1 \in R^\times$ (since its
  residue mod $\mm$ is the non-zero element $\alpha_1$), and so we are
  done by Claim~\ref{claim1}.
\end{proof}

\subsection{Linearly ordering the primes}
\begin{lemma} \label{step-1}
  Let $R$ be an $\Ff_p$-algebra that is integral and has exactly two
  maximal ideals $\mm_1$ and $\mm_2$.  Suppose that $R/\mm_1$ and
  $R/\mm_2$ are infinite.  Then $R$ isn't NIP.
\end{lemma}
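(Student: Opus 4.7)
Plan. I would argue by contradiction: assume $R$ is NIP and derive a contradiction by adapting the proof of Theorem~\ref{ksw++} to exploit the two maximal ideals simultaneously.

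First I would use the Chinese Remainder Theorem together with the infiniteness of both residue fields $k_1 = R/\mm_1$ and $k_2 = R/\mm_2$ to choose elements $a_1, \ldots, a_n \in R^\times$ whose images in $k_1$ form an $\Ff_p$-linearly independent tuple and whose images in $k_2$ also form an $\Ff_p$-linearly independent tuple. The fact that the $a_i$ are units in $R$ follows because their residues in both $k_1$ and $k_2$ are nonzero, so $a_i \notin \mm_1 \cup \mm_2$. Here $n$ is the Baldwin--Saxl bound for the uniformly definable family of additive subgroups $\{g_c(R) : c \in R\}$ of $R$, which is available because $R$ is assumed NIP.

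Next I would run the construction of $V_S$, $f_S$, $g_{S,i}$, and $c_{S,i}$ exactly as in the proof of Theorem~\ref{ksw++}. The crucial generalization of Claim~\ref{claim2}---that $c_{S,i}^{p-1} - c_{S,j}^{p-1}$ is a unit of $R$---is obtained by running the residue-field argument of Claim~\ref{claim2} modulo $\mm_1$ and modulo $\mm_2$ separately; both applications succeed precisely because we arranged the residues of the $a_i$ to be linearly independent in both $k_1$ and $k_2$, so that $V'_{S \cup i} \ne V'_{S \cup j}$ in each residue field. The remainder of the ksw++ argument then goes through verbatim and yields that the Artin--Schreier map $x \mapsto x^p - x$ is a surjection $R \to R$.

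The main obstacle---and the step I am least certain about---is extracting a genuine contradiction from this conclusion, since AS-surjectivity of an integral domain is not on its face incompatible with having two maximal ideals. My attempt would be to combine AS-surjectivity with the CRT idempotent structure: pick $e \in R$ representing $(1,0) \in k_1 \times k_2$ so that $e(1-e) \in \mm_1 \cap \mm_2$ is nonzero, and then try to upgrade $e$ into a genuine nontrivial idempotent of $R$ by an iterative Artin--Schreier-style correction (potentially using the richer class of additive polynomials $f_V$ from Lemma~\ref{lam3} rather than just $T^p-T$). Such a lift would contradict the integrality of $R$. If a direct AS-lifting does not suffice, alternative tools I would reach for are the field topology on $\Frac(R)$ from Corollary~\ref{joke} and the $I = I^{00}$ property for definable ideals from Proposition~\ref{connecter}, either of which might let one bridge the gap between AS-surjectivity on $R$ and the desired idempotent lift.
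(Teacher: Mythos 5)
Your first half is essentially sound, though more laborious than necessary: choosing the $a_i$ by CRT so their residues are $\Ff_p$-independent in both $k_1$ and $k_2$ does make the Claim~\ref{claim2} argument run modulo $\mm_1$ and $\mm_2$ separately, giving that $c_{S,i}^{p-1}-c_{S,j}^{p-1}$ is a unit, and the rest of the Theorem~\ref{ksw++} machinery then yields surjectivity of $x\mapsto x^p-x$ on $R$. (The paper gets this intermediate fact more cheaply: $R_{\mm_1}$ and $R_{\mm_2}$ are NIP by Corollary~\ref{localize-nip} and satisfy the hypotheses of Theorem~\ref{ksw++}, so each element of $R$ has Artin--Schreier roots in $R_{\mm_1}$ and in $R_{\mm_2}$, and since any two roots differ by an element of $\Ff_p$, one root lies in $R_{\mm_1}\cap R_{\mm_2}=R$.)

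The genuine gap is exactly where you flag it: you never extract a contradiction, and the route you sketch cannot work. Artin--Schreier surjectivity of a domain is simply not incompatible with having two maximal ideals---for instance, the intersection of two incomparable valuation rings of a separably closed field of characteristic $p$ is an Artin--Schreier-closed domain with two maximal ideals---so no amount of ``upgrading'' $e$ with $e(1-e)\in\mm_1\cap\mm_2$ into an idempotent can succeed from that hypothesis alone; indeed $R$ being a domain has no nontrivial idempotents, and nothing forces one to exist. The missing idea is model-theoretic, and it is the one you only gesture at: let $J=\mm_1\cap\mm_2$ (the Jacobson radical), which satisfies $J=J^{00}$ and hence has no definable proper subgroup of finite index by Proposition~\ref{connecter} (both residue fields are infinite, and $\mm_1,\mm_2$ are definable by Corollary~\ref{def-max}). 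Using AS-surjectivity, every $y\in J$ can be written $y=x^p-x$ with $x\in R$, and then $x$ reduces to elements $i\in\Ff_p$ mod $\mm_1$ and $j\in\Ff_p$ mod $\mm_2$; the assignment $\psi(y)=i-j$ is well defined (changing the root $x$ by an element of $\Ff_p$ shifts $i$ and $j$ equally), is a definable group homomorphism $J\to\Ff_p$ (its graph is the definable set $\Gamma$ built from $\Delta=\{(x,i,j): x-i\in\mm_1,\ x-j\in\mm_2\}$), and is surjective by CRT. Its kernel is then a definable subgroup of $J$ of index $p$, contradicting Proposition~\ref{connecter}. Without this (or some equivalent) construction, your argument establishes AS-surjectivity but stops short of proving the lemma.
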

The proof uses an identical strategy to \cite[Lemma~2.6]{prdf1a}.
\begin{proof}
  Suppose $R$ is NIP.  By Corollary~\ref{def-max}, $\mm_1, \mm_2$ are
  definable.  Let $K = \Frac(R)$.  Regard the localizations
  $R_{\mm_1}$ and $R_{\mm_2}$ as definable subrings of $K$.  Note that
  $R_{\mm_1} \cap R_{\mm_2} = R$, by commutative algebra.  (If $x \in
  K \setminus R$, then let $I = \{a \in R : ax \in R\}$; this is a
  proper ideal in $R$ so it is contained in some $\mm_i$, and then $I
  \subseteq \mm_i$ means precisely that $x \notin R_{\mm_i}$.)
  \begin{claim} \label{claim-pre}
    If $x \in R$, then the Artin-Schreier roots of $x$ are in $R$.
  \end{claim}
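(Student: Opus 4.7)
The plan is to reduce the claim to Theorem~\ref{ksw++} applied separately at each maximal ideal, and then glue the resulting Artin-Schreier roots back to $R$ using the identity $R = R_{\mm_1} \cap R_{\mm_2}$ already recorded in the proof preceding the claim.

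First I would verify that each localization $R_{\mm_i}$ satisfies the hypotheses of Theorem~\ref{ksw++}. It is an $\Ff_p$-algebra, a local integral domain (as a localization of a domain), with maximal ideal $\mm_i R_{\mm_i}$ and residue field $R_{\mm_i}/\mm_i R_{\mm_i} \cong R/\mm_i$, which is infinite by hypothesis. NIP-ness follows from Corollary~\ref{localize-nip}. Hence the Artin-Schreier map $y \mapsto y^p - y$ is surjective on $R_{\mm_i}$ for each $i \in \{1, 2\}$.

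Next, given $x \in R$, surjectivity on $R_{\mm_1}$ produces some $y_1 \in R_{\mm_1}$ with $y_1^p - y_1 = x$. Let $y \in K^{alg}$ be an arbitrary Artin-Schreier root of $x$, i.e., $y^p - y = x$. The difference $y - y_1$ satisfies $(y - y_1)^p - (y - y_1) = 0$, and the roots of $Y^p - Y$ are precisely the elements of $\Ff_p$; hence $y - y_1 \in \Ff_p \subseteq R \subseteq R_{\mm_1}$, which gives $y \in R_{\mm_1}$. The same argument with $R_{\mm_2}$ yields $y \in R_{\mm_2}$, and therefore $y \in R_{\mm_1} \cap R_{\mm_2} = R$, as required.

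I do not expect any real obstacle here: once Theorem~\ref{ksw++} is in hand, the argument is structural. The only small point worth flagging is that the claim requires \emph{every} Artin-Schreier root of $x$ to lie in $R$, not merely some root; this is handled by the observation that all roots of $Y^p - Y - x$ form a single coset of $\Ff_p$, so producing one root in $R_{\mm_i}$ automatically places all of them there.
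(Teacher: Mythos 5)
Your proposal is correct and follows essentially the same route as the paper: apply Theorem~\ref{ksw++} to the two localizations $R_{\mm_1}$ and $R_{\mm_2}$ (whose residue fields $R/\mm_i$ are infinite and which are NIP by Corollary~\ref{localize-nip}), note that all Artin-Schreier roots of $x$ differ by elements of $\Ff_p$, and conclude via $R_{\mm_1} \cap R_{\mm_2} = R$. The only cosmetic difference is that you place an arbitrary root in both localizations directly, whereas the paper compares a root from each localization and then translates by $\Ff_p$; the content is identical.
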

  \begin{claimproof}
    The rings $R_{\mm_1}$ and $R_{\mm_2}$ satisfy the conditions of
    Theorem~\ref{ksw++}.  (The residue field of $R_{\mm_i}$ is
    isomorphic to $R/\mm_i$, hence infinite.)  Therefore, there are $y
    \in R_{\mm_1}$ and $z \in R_{\mm_2}$ such that $y^p - y = x = z^p
    - z$.  Then $y-z$ is in the kernel of the Artin-Schreier map,
    which is $\Ff_p$, so $y \in z + \Ff_p \subseteq R_{\mm_2}$.  As $y
    \in R_{\mm_1}$, this implies $y \in R_{\mm_1} \cap R_{\mm_2} = R$.
    Thus, at least one Artin-Schreier root ($y$) is in $R$.  The other
    Artin-Schreier roots of $x$ are the elements of $y + \Ff_p$, which
    are all in $R$.
  \end{claimproof}
  Let $J = \mm_1 \cap \mm_2$.  This is the Jacobson radical of $R$.
  By Proposition~\ref{connecter}, $J = J^{00}$, and there are no definable
  subgroups of finite index.  Consider the sets
  \begin{gather*}
    \Delta = \{(x,i,j) \in R \times \Ff_p \times \Ff_p : x - i \in \mm_1, ~ x - j \in \mm_2\} \\
    \Gamma = \{(x^p - x, i - j) : (x,i,j) \in \Delta\}.
  \end{gather*}
  Then $(\Delta,+)$ and $(\Gamma,+)$ are definable groups.
  \begin{claim}
    $\Gamma$ is the graph of a group homomorphism $\psi$ from $(J,+)$
    onto $(\Ff_p,+)$.
  \end{claim}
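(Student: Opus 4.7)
The plan is to verify four properties that together force $\Gamma$ to be the graph of a homomorphism $\psi \colon J \twoheadrightarrow \Ff_p$: (a) every element of $\Gamma$ has first coordinate in $J$, (b) every element of $J$ arises as a first coordinate, (c) the first coordinate determines the second, and (d) the resulting function is surjective onto $\Ff_p$. Once (a)--(c) are established, the homomorphism property of $\psi$ is automatic, since $\Gamma$ itself is a subgroup of $J \times \Ff_p$: it is the image of the definable additive group $\Delta$ under the additive map $(x,i,j) \mapsto (x^p - x,\, i - j)$.

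Part (a) is immediate from the congruences defining $\Delta$: if $x \equiv i \pmod{\mm_1}$ with $i \in \Ff_p$, then $x^p \equiv i^p = i \pmod{\mm_1}$, so $x^p - x \in \mm_1$, and symmetrically $x^p - x \in \mm_2$, hence $x^p - x \in J$. For part (b), I would start from $z \in J$ and invoke Claim~\ref{claim-pre} to produce $x \in R$ with $x^p - x = z$; factoring $x^p - x = \prod_{k \in \Ff_p}(x - k) \in \mm_\ell$ and using primality yields some $i, j \in \Ff_p$ with $(x,i,j) \in \Delta$ and $(z,\, i - j) \in \Gamma$.

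For (c), the key observation is that if $(x,i,j)$ and $(x',i',j')$ in $\Delta$ have the same first coordinate, then $(x - x')^p - (x - x') = 0$, so $x - x' \in \Ff_p$ (the kernel of Artin-Schreier in the domain $R$); reducing modulo $\mm_1$ and $\mm_2$ and using $\Ff_p \cap \mm_\ell = \{0\}$ forces $i - i' = j - j'$, whence $i - j = i' - j'$. Finally, for (d), the Chinese Remainder Theorem applied to the coprime maximals $\mm_1, \mm_2$ produces $x \in R$ with $x \in \mm_1$ and $x - 1 \in \mm_2$, so $(x, 0, 1) \in \Delta$ and $\psi(x^p - x) = -1 \ne 0$; this suffices because $\Ff_p$ has no proper nontrivial subgroups. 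I do not anticipate any serious obstacle: the substantive input is Claim~\ref{claim-pre} (powered by Theorem~\ref{ksw++}), which is what lets step (b) reach all of $J$, while the remaining steps amount to routine Chinese-remainder and Artin-Schreier-kernel bookkeeping.
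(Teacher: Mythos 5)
Your proposal is correct and follows essentially the same route as the paper: show $\Gamma \subseteq J \times \Ff_p$, use Claim~\ref{claim-pre} to get surjectivity onto $J$, check that the first coordinate determines the second via the Artin--Schreier kernel $\Ff_p$, and get surjectivity onto $\Ff_p$ from the Chinese remainder theorem, with the homomorphism property coming from $\Gamma$ being a subgroup. The only differences (verifying well-definedness with two triples instead of the kernel of the projection $\Gamma \to J$, and hitting a single nonzero value of $\Ff_p$ rather than every $i$) are cosmetic.
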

  \begin{claimproof}
    First, we show that $\Gamma \subseteq J \times \Ff_p$.  Suppose
    $(x,i,j) \in \Delta$.  Then $x \equiv i \pmod{\mm_1}$, so $x^p - x
    \equiv i^p - i \equiv 0 \pmod{\mm_1}$, and $x^p - x \in \mm_1$.
    Similarly, $x^p - x \in \mm_2$, and therefore $x^p - x \in J$.
    Thus $(x^p - x, i - j) \in J \times \Ff_p$.

    Next we show that $\Gamma$ projects onto $J$.  Take $y \in J$.  By
    Claim~\ref{claim-pre} there is $x \in R$ with $x^p - x = y$.  Then
    $x^p - x \equiv y \equiv 0 \pmod{\mm_1}$, so $x^p - x \equiv i
    \pmod{\mm_1}$ for some $i \in \Ff_p$.  Similarly, $x^p - x \equiv
    j \pmod{\mm_2}$ for some $j \in \Ff_p$.  Then $(x,i,j) \in \Delta$
    and $(x^p - x, i - j) = (y, i - j) \in \Gamma$.

    Next we show that the projection $\Gamma \to J$ is one-to-one.
    Otherwise, $\Gamma \to J$ has non-trivial kernel, so there is
    $(x,i,j) \in \Delta$ with $x^p - x = 0$ but $i - j \ne 0$.  The
    fact that $x^p - x = 0$ implies $x \in \Ff_p$, and so $x \equiv i
    \pmod{\mm_1}$ implies $x = i$.  Similary, $x = j$.  But then $i -
    j = 0$, a contradiction.

    So now we see that $\Gamma \to J$ is one-to-one and onto, implying
    that $\Gamma$ is the graph of some group homomorphism $\psi$ from
    $J$ to $\Ff_p$.  It remains to show that $\psi$ is onto.
    Equivalently, we must show that $\Gamma$ projects onto $\Ff_p$.
    Let $i \in \Ff_p$ be given.  By the Chinese remainder theorem,
    there is $x \in R$ such that $x \equiv i \pmod{\mm_1}$ and $x
    \equiv 0 \pmod{\mm_2}$.  Then $(x,i,0) \in \Delta$, so $(x^p - x,
    i - 0) \in \Gamma$.  The element $(x^p - x,i)$ projects onto $i$.
    Equivalently, $\psi(x^p - x) = i$.
  \end{claimproof}
  Therefore there is a definable surjective group homomorphism $\psi :
  J \to \Ff_p$.  The kernel $\ker \psi$ is a definable subgroup of $J$
  of index $p$.  This contradicts Proposition~\ref{connecter}.
\end{proof}

\begin{lemma}\label{step-2}
  Let $R$ be a NIP integral $\Ff_p$-algebra.  Let $\pp_1, \pp_2$ be
  prime ideals such that $R/\pp_1$ and $R/\pp_2$ are infinite.  Then
  $\pp_1$ and $\pp_2$ are comparable.
\end{lemma}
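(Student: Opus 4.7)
The plan is to reduce to the setting of Lemma~\ref{step-1} by localizing at the complement of $\pp_1 \cup \pp_2$. Suppose for contradiction that $\pp_1$ and $\pp_2$ are incomparable. Set $S = R \setminus (\pp_1 \cup \pp_2)$. Since $\pp_1$ and $\pp_2$ are prime, $S$ is multiplicatively closed. Let $R' = S^{-1}R$.

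I would first verify that $R'$ has exactly the structure needed for Lemma~\ref{step-1}. By Corollary~\ref{localize-nip} the ring $R'$ is NIP; it is integral as the localization of a domain, and it remains an $\Ff_p$-algebra. The prime ideals of $R'$ correspond to primes $\qq$ of $R$ disjoint from $S$, i.e., with $\qq \subseteq \pp_1 \cup \pp_2$. By prime avoidance, any such $\qq$ is contained in $\pp_1$ or in $\pp_2$. Since $\pp_1$ and $\pp_2$ are assumed incomparable, the maximal elements of this collection are exactly $\pp_1$ and $\pp_2$ themselves, so $R'$ has exactly two maximal ideals, $\mm_i := \pp_i R'$ for $i = 1, 2$.

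Next I would check that the residue fields $R'/\mm_i$ are infinite. The natural ring map $R/\pp_i \to R'/\mm_i$ is injective: if $r \in R$ with $r \in \pp_i R'$, then $sr \in \pp_i$ for some $s \in S$, and since $s \notin \pp_i$ and $\pp_i$ is prime, $r \in \pp_i$. Hence $R/\pp_i$ embeds in $R'/\mm_i$, and since $R/\pp_i$ is infinite by hypothesis, so is $R'/\mm_i$. Now Lemma~\ref{step-1} applied to $R'$ yields that $R'$ is not NIP, contradicting Corollary~\ref{localize-nip}.

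The only non-routine point is the combinatorial step identifying the maximal ideals of $R'$ (which relies on prime avoidance together with incomparability); everything else is a direct invocation of results proved earlier in the paper. I do not anticipate a substantive obstacle beyond being careful that the embedding $R/\pp_i \hookrightarrow R'/\mm_i$ really is injective and that prime avoidance is applied correctly to a union of two primes (where no hypothesis on the residue fields is needed for that set-theoretic step).
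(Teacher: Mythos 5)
Your proposal is correct and follows essentially the same route as the paper: localize at $S = R \setminus (\pp_1 \cup \pp_2)$, note $S^{-1}R$ is NIP by Corollary~\ref{localize-nip}, identify its two maximal ideals $\pp_i S^{-1}R$ with infinite residue fields via the embedding $R/\pp_i \hookrightarrow S^{-1}R/\pp_i S^{-1}R$, and contradict Lemma~\ref{step-1}. The extra details you supply (prime avoidance and the injectivity check) are exactly the routine verifications the paper leaves implicit.
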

\begin{proof}
  Suppose otherwise.  Let $S = R \setminus (\pp_1 \cup \pp_2)$.  Then
  $S$ is a multiplicative subset of $R$.  Let $R' = S^{-1} R$.  Then
  $R'$ is NIP by Corollary~\ref{localize-nip}.  The ring $R'$ has
  exactly two maximal ideals $\mm_1, \mm_2$, where $\mm_i = \pp_i R'$.
  The map $R/\pp_i \to R'/\mm_i$ is injective, so $R'/\mm_i$ is
  infinite, for $i = 1, 2$.  This contradicts Lemma~\ref{step-1}.
\end{proof}



\begin{lemma} \label{step-3}
  Let $R$ be an $\Ff_p$-algebra that is integral and has exactly two
  maximal ideals $\mm_1$ and $\mm_2$.  Then $R$ isn't NIP.
\end{lemma}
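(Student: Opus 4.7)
I argue by contradiction; suppose $R$ is NIP. The plan is to reduce to the setting of Lemma~\ref{step-1}, which already handles the case of both residue fields being infinite.

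First, apply Lemma~\ref{step-2} to the primes $\mm_1$ and $\mm_2$. They are distinct maximal ideals and hence incomparable, so Lemma~\ref{step-2} forces at least one of $R/\mm_1$, $R/\mm_2$ to be finite; say $R/\mm_1$ is finite. (In the other case where both residue fields are infinite, Lemma~\ref{step-1} directly yields the contradiction, so the finite-residue-field case is where the new work lies.) The argument of Lemma~\ref{step-1} does not adapt verbatim, because its final step uses Proposition~\ref{connecter} to deduce $J = J^{00}$ for the Jacobson radical $J = \mm_1 \cap \mm_2$, and this requires all residue fields to be infinite. With $R/\mm_1$ finite, $\mm_1$ has finite index in $R$, and $J$ itself admits definable proper subgroups of finite index, so Proposition~\ref{connecter} is unavailable.

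My strategy is to build an auxiliary NIP integral $\Ff_p$-algebra $\tilde R$ that has exactly two maximal ideals with both residue fields infinite, and then apply Lemma~\ref{step-1} to $\tilde R$ for the contradiction. A natural candidate is the semilocalization $\tilde R = S^{-1} R[t]$ with $S = R[t] \setminus (\mm_1 R[t] \cup \mm_2 R[t])$: this is an integral $\Ff_p$-algebra whose two maximal ideals have residue fields $(R/\mm_i)(t)$, both infinite, so Lemma~\ref{step-1} would apply to it and finish the proof.

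The main obstacle is establishing that $\tilde R$ is NIP: polynomial extensions of NIP rings are generally not NIP, and finite residue fields are rigid under elementary extensions of $R$, so no standard model-theoretic enlargement of $R$ produces such a $\tilde R$. The idea would be to realize $\tilde R$ interpretably inside the Shelah expansion $R^{\Sh}$ --- exploiting Theorem~\ref{localizations} to handle the localization step --- with $t$ chosen as a sufficiently generic element in an elementary extension of $R$. Verifying NIP of this interpretation is the technical heart of the argument; once it is in hand, Lemma~\ref{step-1} applied to $\tilde R$ delivers the desired contradiction.
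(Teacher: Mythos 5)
Your reduction to Lemma~\ref{step-1} via an auxiliary ring has a genuine gap at exactly the point you flag as ``the technical heart'': you never establish that $\tilde R = S^{-1}R[t]$ is NIP, and none of the paper's tools can give this. Theorem~\ref{localizations} only handles localizations of $R$ itself (at a multiplicative subset of $R$), not localizations of the polynomial ring $R[t]$; and $R[t]$ is not interpretable in $R^{\Sh}$ --- choosing $t$ in an elementary extension $R' \succeq R$ does not help, since $R[t]$ is then a subring of $R'$, not a subset of (a power of) $R$, so external definability in $R$ does not even apply, and substructures of NIP structures need not be NIP. Worse, the statement you would need is false in general: polynomial rings over infinite domains are model-theoretically wild (divisibility in $K[t]$ codes enough arithmetic to destroy NIP), so no amount of care in setting up the interpretation can rescue this route. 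In short, the case you correctly identify as the hard one ($R/\mm_1$ finite, where Proposition~\ref{connecter} is unavailable) is left unresolved.

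The paper handles that case by a different, purely ideal-theoretic maneuver that never touches the residue fields of $\mm_1,\mm_2$. Pass to a saturated elementary extension and pick $a$ with $a \equiv 0 \pmod{\mm_1}$, $a \equiv 1 \pmod{\mm_2}$. The partial type asserting $x \in \mm_1$, $x \notin \mm_2$, and $x \nmid a^n$ for all $n$ is finitely satisfiable (take $x = a^{n+1}$, using that $R$ is a domain), so saturation yields a realization $a'$. An ideal containing $(a')$ and maximal among ideals avoiding the multiplicative set $\{a^n\}$ is a prime $\pp_1$ with $\pp_1 \subsetneq \mm_1$ and $\pp_1 \not\subseteq \mm_2$; symmetrically one gets $\pp_2$. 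These $\pp_1,\pp_2$ are incomparable and non-maximal, so $R/\pp_1$ and $R/\pp_2$ are non-field domains, hence automatically infinite, and Lemma~\ref{step-2} gives the contradiction. This sidesteps the finite-residue-field obstruction by moving down to non-maximal primes rather than trying to enlarge the residue fields; if you want to salvage your write-up, replace the $R[t]$ construction with an argument of this kind.
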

\begin{proof}
  Assume otherwise.  Going to an elementary extension, we may assume
  that $R$ is very saturated.  By the Chinese remainder theorem, there
  is some $a \in R$ such that $a \equiv 0 \pmod{\mm_1}$ but $a \equiv
  1 \pmod{\mm_2}$.

  Let $\Sigma(x)$ be the partial type saying that $x \in \mm_1$, $x
  \notin \mm_2$, and $x$ does not divide $a^n$ for any
  $n$.
  \begin{claim}
    $\Sigma(x)$ is finitely satisfiable.
  \end{claim}
  \begin{claimproof}
    Let $n$ be given.  We claim there is an $x$ such that $x \in
    \mm_1$, $x \notin \mm_2$, and $x$ does not divide $a^i$ for $i \le
    n$.  Take $x = a^{n+1}$.  Then $x \equiv 0^{n+1} \equiv 0
    \pmod{\mm_1}$, so $x \in \mm_1$.  But $x \equiv 1^{n+1} \equiv 1
    \pmod{\mm_2}$, so $x \notin \mm_2$.  Finally, suppose $x =
    a^{n+1}$ divides $a^i$ for some $i \le n$. Then there is $u \in R$
    with $ua^{n+1} = a^i$.  Since $R$ is a domain, we can cancel a
    factor of $a^i$ from both sides, and see $ua^{n+1-i} = 1$.  This
    implies that $a$ is a unit, contradicting the fact that $a \in
    \mm_1$.
  \end{claimproof}
  By saturation, there is $a' \in R$ satisfying $\Sigma(x)$.  The
  principal ideal $(a')$ does not intersect the multiplicative set $S
  := a^{\Nn}$, by definition of $\Sigma(x)$.  Let $\pp_1$ be maximal
  among ideals containing $(a')$ and avoiding $S$.  Then $\pp_1$ is a
  prime ideal.  (In general, any ideal that is maximal among ideals
  avoiding a multiplicative set is prime.)

  Now $\pp_1 \not \subseteq \mm_2$, because $a' \in \pp_1$ but $a'
  \notin \mm_2$.  But $\pp_1$ must be contained in \emph{some} maximal
  ideal, and so $\pp_1 \subseteq \mm_1$.  The inclusion is strict,
  because $a \in \mm_1$ but $a \notin \pp_1$.  Thus $\pp_1 \subsetneq
  \mm_1$ and $\pp_1 \not \subseteq \mm_2$.  In particular, $\pp_1$ is
  not a maximal ideal.

  Similarly, there is a non-maximal prime ideal $\pp_2$ with $\pp_2
  \subsetneq \mm_2$ and $\pp_2 \not \subseteq \mm_1$.  Then $\pp_1$
  and $\pp_2$ are incomparable.  Otherwise, say, $\pp_1 \subseteq
  \pp_2 \subsetneq \mm_2$, and so $\pp_1 \subseteq \mm_2$, a
  contradiction.  For $i = 1, 2$, the fact that $\pp_i$ is a
  non-maximal prime ideal implies that $R/\pp_i$ is a non-field
  integral domain, and therefore infinite.  This contradicts
  Lemma~\ref{step-2}.
\end{proof}

\begin{theorem} \label{linear}
  Let $R$ be a NIP integral $\Ff_p$-algebra.  Then the prime ideals of
  $R$ are linearly ordered by inclusion.
\end{theorem}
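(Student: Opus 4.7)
The plan is to argue by contradiction, reusing the localization strategy from Lemma~\ref{step-2} but invoking the stronger Lemma~\ref{step-3} (which has no infiniteness hypothesis on the residue fields). Suppose toward contradiction that $\pp_1$ and $\pp_2$ are incomparable prime ideals of $R$.

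First I would form the multiplicative set $S = R \setminus (\pp_1 \cup \pp_2)$, which is multiplicatively closed because $\pp_1$ and $\pp_2$ are prime, and then pass to the localization $R' = S^{-1}R$. This $R'$ is still an integral $\Ff_p$-algebra, and Corollary~\ref{localize-nip} ensures it is NIP.

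Next I would identify the maximal ideals of $R'$. Primes of $R'$ correspond bijectively to primes $\pp$ of $R$ disjoint from $S$, that is, primes contained in $\pp_1 \cup \pp_2$. By prime avoidance, any such $\pp$ satisfies $\pp \subseteq \pp_1$ or $\pp \subseteq \pp_2$, so $\pp_1 R'$ and $\pp_2 R'$ are the only maximal ideals of $R'$. These two ideals are distinct precisely because $\pp_1$ and $\pp_2$ are incomparable in $R$.

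At this point Lemma~\ref{step-3}, applied to $R'$, delivers the desired contradiction. There is essentially no obstacle to overcome here: the genuine work has already been done in Lemma~\ref{step-1} (via the Artin--Schreier argument building on Theorem~\ref{ksw++}) and Lemma~\ref{step-3} (via the Chinese remainder theorem plus saturation). The role of this theorem is simply to package the localization reduction, and the only thing one must verify carefully is that the correspondence of primes under localization, combined with prime avoidance, produces exactly two maximal ideals in $R'$.
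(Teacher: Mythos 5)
Your proposal is correct and is essentially identical to the paper's argument: the paper proves Theorem~\ref{linear} by repeating the localization reduction of Lemma~\ref{step-2} verbatim, with Lemma~\ref{step-3} playing the role of Lemma~\ref{step-1}. Your added verification (via prime avoidance and the correspondence of primes) that $S^{-1}R$ has exactly the two maximal ideals $\pp_1 S^{-1}R$ and $\pp_2 S^{-1}R$ is exactly the point the paper leaves implicit, and it is handled correctly.
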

\begin{proof}
  The same proof as Lemma~\ref{step-2}, using Lemma~\ref{step-3}
  instead of Lemma~\ref{step-1}.
\end{proof}

\begin{corollary} \label{forest}
  Let $R$ be a NIP $\Ff_p$-algebra.  Let $\pp_1, \pp_2, \qq$ be prime
  ideals.  If $\pp_i \supseteq \qq$ for $i =1 , 2$, then $\pp_1$ is
  comparable to $\pp_2$.
\end{corollary}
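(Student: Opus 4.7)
The plan is to reduce to Theorem~\ref{linear} by passing to the quotient $R/\qq$. Since $\qq$ is prime, it is in particular a radical ideal, so by the corollary following Theorem~\ref{radical}, the quotient $R/\qq$ is NIP. Moreover, $R/\qq$ is an integral domain (as $\qq$ is prime) and an $\Ff_p$-algebra (as $R$ is), so the hypotheses of Theorem~\ref{linear} are met.

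By Theorem~\ref{linear}, the prime ideals of $R/\qq$ are linearly ordered by inclusion. The prime ideals of $R$ containing $\qq$ correspond bijectively and order-isomorphically to the prime ideals of $R/\qq$ via $\pp \mapsto \pp/\qq$. Since $\pp_1, \pp_2 \supseteq \qq$, both $\pp_1/\qq$ and $\pp_2/\qq$ are prime ideals of $R/\qq$, hence comparable; pulling back, $\pp_1$ and $\pp_2$ are comparable in $R$.

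There is no real obstacle here: the only thing to check is that the corollary after Theorem~\ref{radical} genuinely applies (prime ideals are radical, so it does), and that being an $\Ff_p$-algebra passes to quotients (which is immediate). The result is essentially a packaging of Theorem~\ref{linear} relativized to the integral quotient $R/\qq$.
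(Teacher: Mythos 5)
Your proof is correct and follows the same route as the paper: pass to the NIP integral $\Ff_p$-algebra $R/\qq$ (NIP via the corollary on quotients by radical ideals, primes being radical) and apply Theorem~\ref{linear}, pulling back along the order-preserving correspondence of primes containing $\qq$. The paper's proof is just a one-line version of exactly this argument.
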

\begin{proof}
  Otherwise, $\pp_1$ and $\pp_2$ induce incomparable primes in the NIP
  domain $R/\qq$.
\end{proof}

\subsection{Henselianity}
\begin{definition}
  A \emph{forest} is a poset $(P,\le)$ with the property that if $x
  \in P$, then the set $\{y \in P : y \ge x\}$ is linearly ordered.
\end{definition}
\begin{definition}
  A ring $R$ is \emph{good} if $\Spec R$ is a forest of finite width.
\end{definition}
\begin{lemma} \label{good-times}
  ~
  \begin{enumerate}
  \item If $R$ is a NIP $\Ff_p$-algebra, then $R$ is good.
  \item If $R$ is good, then any quotient $R/I$ is good.
  \item If $R$ is good, then $R$ is a finite product of local rings.
  \end{enumerate}
\end{lemma}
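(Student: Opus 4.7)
My plan splits along the three claims.

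Part (1) is immediate from earlier results: finite width of $\Spec R$ is Fact~\ref{width}, and the forest property is exactly the statement of Corollary~\ref{forest}.

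Part (2) is also immediate, since $\Spec(R/I)$ is canonically identified with the subposet of $\Spec R$ consisting of primes containing $I$, and the properties ``forest'' and ``finite width'' are clearly inherited by arbitrary subposets.

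Part (3) is the substantive assertion. By Fact~\ref{width}, $R$ has finitely many maximal ideals $\mm_1,\ldots,\mm_k$ and finitely many minimal primes. The forest property forces each prime of $R$ to lie below a unique maximal ideal, so $\Spec R$ partitions as $\bigsqcup_{i=1}^{k} C_i$ where $C_i = \{\pp \in \Spec R : \pp \subseteq \mm_i\}$. Letting $N_i$ be the intersection of the finitely many minimal primes contained in $C_i$, a direct check using the forest property gives $V(N_i) = C_i$: any prime above a minimal prime of $C_i$ lies in the (unique) chain topped by $\mm_i$, and conversely any $\qq \in C_i$ is above some minimal prime of $R$, which by the forest property must itself belong to $C_i$. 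Hence each $C_i$ is closed in $\Spec R$, and being members of a finite partition by closed sets, also open.

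The main obstacle is converting this clopen decomposition of $\Spec R$ into a ring-theoretic product decomposition. I would induct on $k$, reducing to the two-block case $\Spec R = X_1 \sqcup X_2$ with both parts clopen. Writing $X_1 = V(I)$ and $X_2 = V(J)$, disjointness gives $I + J = R$ and covering gives $V(IJ) = \Spec R$, so $IJ$ lies in the nilradical. Choosing $1 = a + b$ with $a \in I$ and $b \in J$, the element $a^2 - a = -ab$ is nilpotent, so $a$ is idempotent modulo the nilpotent principal ideal $(ab)$; a standard lifting of idempotents modulo a nilpotent ideal (e.g., iterating $a \mapsto 3a^2 - 2a^3$, whose error squares at each step) produces an honest idempotent $e \in R$, yielding $R \cong Re \times R(1-e)$ whose factors have spectra $X_1$ and $X_2$. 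Induction on the second factor completes the proof: $R \cong R_1 \times \cdots \times R_k$ with $\Spec R_i = C_i$, and each $R_i$ is local since $\mm_i$ is the unique maximal element of its spectrum.
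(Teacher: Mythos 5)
Your proposal is correct and follows essentially the same route as the paper: parts (1) and (2) are handled identically, and in part (3) you form the same partition of $\Spec R$ into the sets of primes below each maximal ideal and prove closedness via the finitely many minimal primes and the forest property. The only difference is that where the paper appeals to ``scheme theory'' for the passage from a clopen decomposition (with one closed point per piece) to a product of local rings, you spell out that standard step explicitly by lifting an idempotent modulo a nilpotent ideal, which is a legitimate filling-in of the cited fact rather than a new argument.
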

\begin{proof}
  ~
  \begin{enumerate}
  \item Fact~\ref{width} and Corollary~\ref{forest}.
  \item Clear, since $\Spec R/I$ is a subposet of $\Spec R$.
  \item We now break our usual convention, and regard $\Spec R$ as a
    scheme, or at least a topological space.  By scheme theory, it
    suffices to write $\Spec R$ as a finite disjoint union of clopen
    sets $U_i$, such that each $U_i$ contains a unique closed point.
    Let $\mm_1, \ldots, \mm_n$ be the maximal ideals of $R$.  There
    are finitely many because $\Spec R$ has finite width.  Note that
    every prime ideal $\pp \in R$ satisfies $\pp \subseteq \mm_i$ for
    a unique $i$.  (There is at least one $i$ by Zorn's lemma, and at
    most one $i$ because $\Spec R$ is a forest.)  Let $U_i$ be the set
    of primes below $\mm_i$.  Then $\Spec R$ is a disjoint union of
    the $U_i$.  It remains to show that each $U_i$ is clopen.  It
    suffices to show that each $U_i$ is closed.  Take $i = 1$.  Let
    $\pp_1, \ldots, \pp_m$ be the minimal primes contained in $\mm_1$.
    (There are finitely many, because of finite width.)  Let $V_j$ be
    the set of primes containing $\pp_j$.  Then $V_j$ is a closed
    subset of $\Spec R$---it is the closed subset cut out by the ideal
    $\pp_j$.  Moreover, $V_j \subseteq U_1$, because $\Spec R$ is a
    forest.  The sets $V_1, \ldots, V_m$ cover $U_1$, because every
    prime contains a minimal prime.  Then $U_1$ is a finite union of
    closed sets $\bigcup_{i = 1}^m V_i$, and so $U_1$ is
    closed. \qedhere
  \end{enumerate}
\end{proof}

\begin{proposition} \label{hensel-wow}
  Let $R$ be a NIP local $\Ff_p$-algebra.  Then $R$ is a Henselian
  local ring.
\end{proposition}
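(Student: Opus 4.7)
The plan is to verify the standard characterization: $(R,\mm)$ is Henselian iff for every monic $f \in R[x]$ and every coprime factorization $\bar f = \bar g_0 \bar h_0$ of its reduction (with $k := R/\mm$ and $\bar g_0, \bar h_0 \in k[x]$ monic), there exist monic $g, h \in R[x]$ with $f = gh$, $\bar g = \bar g_0$, and $\bar h = \bar h_0$. So fix such $f$ and a factorization, and form $S := R[x]/(f)$.

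The crucial point is that $S$ is a free $R$-module of rank $n := \deg f$, with multiplication determined by the finitely many structure constants $x^i \cdot x^j \bmod f$; hence $S$ is interpretable in $R$ as a commutative $\Ff_p$-algebra, so $S$ is NIP, and Lemma~\ref{good-times} yields a decomposition $S = \prod_{i=1}^m S_i$ into a finite direct product of local rings. Reducing modulo $\mm$, we get $\bar S := S/\mm S \cong k[x]/(\bar f)$, which by the Chinese remainder theorem splits further as $k[x]/(\bar g_0) \times k[x]/(\bar h_0)$; meanwhile $\bar S = \prod_i S_i/\mm S_i$ with each factor local. The idempotent $\bar e \in \bar S$ cutting out the first CRT factor is a sum of primitive idempotents of this product (since each $S_i/\mm S_i$ is local, and so has only the trivial idempotents), and therefore lifts to an idempotent $e = \sum_{i \in I} e_i \in S$, where $\{e_i\}$ are the primitive idempotents of the decomposition $S = \prod_i S_i$.

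The splitting $S = eS \oplus (1-e)S$ presents $eS$ as an $R$-module direct summand of the free module $S$, hence free over the local ring $R$, of rank $\dim_k \bar e \bar S = \deg \bar g_0$. A standard construction produces a monic $g \in R[x]$ of degree $\deg \bar g_0$ lifting $\bar g_0$ with $g \in \ker(R[x] \to eS)$, and the resulting surjection $R[x]/(g) \twoheadrightarrow eS$ between free $R$-modules of the same finite rank over the local ring $R$ is then an isomorphism. Similarly $(1-e)S \cong R[x]/(h)$ for a monic $h$ lifting $\bar h_0$, and the identification $R[x]/(f) \cong R[x]/(g) \times R[x]/(h)$ forces $f = gh$, giving the required factorization.

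The only model-theoretic input is Lemma~\ref{good-times}; everything else is classical commutative algebra. The main work lies in matching the abstract local decomposition of $S$ with the explicit CRT decomposition of $\bar S$ and then running the rank-counting argument that extracts monic polynomial factors of the prescribed degrees. This bookkeeping is standard but somewhat fiddly; once it is done, no further NIP-theoretic effort is required.
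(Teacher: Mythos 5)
Your argument is correct, but it takes a different route from the paper. The paper verifies condition (9) of \cite[Lemma~04GG]{stacks-project} -- \emph{every} finite $R$-algebra is a product of local rings -- by writing an arbitrary finite $R$-algebra as a quotient of $R[x_1,\ldots,x_n]/(P_1(x_1),\ldots,P_n(x_n))$, interpreting that free $R$-module of finite rank in $R$ to see it is NIP, and then invoking all three parts of Lemma~\ref{good-times} (in particular that quotients of good rings are good); the equivalence of condition (9) with Henselianity is outsourced to the Stacks project. You instead target the factorization-lifting characterization directly, and for that you only ever need the single monogenic algebra $S = R[x]/(f)$, interpreted in $R$ exactly as in the paper, split into local rings by Lemma~\ref{good-times}(1),(3); the rest of your proof (idempotent lifting against the CRT decomposition of $S/\mm S$, freeness of $eS$ over the local ring $R$, and the extraction of monic factors) is classical commutative algebra that essentially re-proves, in the special case needed, the implication the paper cites. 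The model-theoretic kernel -- a finite free $R$-algebra is interpretable in $R$, hence NIP, hence a finite product of local rings -- is identical in both proofs. What your route buys is self-containedness (no appeal to the chain of equivalences in 04GG beyond taking factorization-lifting as the definition) at the cost of the ``fiddly'' algebra you acknowledge: to be complete you should spell out the standard construction, e.g.\ take $g$ to be the characteristic polynomial of multiplication by the image of $x$ on the free module $eS$ (so $g(\alpha)=0$ by Cayley--Hamilton and $\bar g = \bar g_0$), get surjectivity of $R[x]/(g) \to eS$ from Nakayama, and conclude $f = gh$ from $f \mid gh$ with both monic of degree $\deg f$. The paper's proof is shorter precisely because it lets the cited equivalence absorb this bookkeeping, and its condition-(9) formulation is what the surrounding results (Theorem~\ref{prod-of-hens}) reuse.
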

\begin{proof}
  By \cite[Lemma~04GG, condition (9)]{stacks-project}, it suffices to
  prove the following: any finite $R$-algebra is a product of local
  rings.  Let $S$ be a finite $R$-algebra.  Let $a_1, \ldots, a_n$ be
  elements of $S$ which generate $S$ as an $R$-module.  Each $a_i$ is
  integral over $R$, so there is a monic polynomial $P_i(x) \in R[x]$
  such that $P_i(a) = 0$ in $S$.  Then there is a surjective
  homomorphism
  \begin{equation*}
    R[x_1,\ldots,x_n]/(P_1(x_1),\ldots,P_i(x_i)) \to S.
  \end{equation*}
  The ring on the left is interpretable in $R$---it is a finite-rank
  free $R$-module with basis the monomials $\prod_{i = 1}^n x_i^{n_i}$
  for $\bar{n} \in \prod_{i = 1}^n \{0,1,\ldots,\deg P_i - 1\}$.
  Therefore, the left hand side is a NIP ring.  By
  Lemma~\ref{good-times}, it is good, $S$ is good, and $S$ is a finite
  product of local rings.
\end{proof}

\begin{theorem} \label{prod-of-hens}
  Let $R$ be a NIP $\Ff_p$-algebra.  Then $R$ is a finite product of
  Henselian local rings.
\end{theorem}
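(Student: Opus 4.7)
The plan is to combine Lemma~\ref{good-times} with Proposition~\ref{hensel-wow}, which together carry almost all of the content. First I would apply Lemma~\ref{good-times}(1) to conclude that the NIP $\Ff_p$-algebra $R$ is good, and then apply Lemma~\ref{good-times}(3) to obtain a decomposition $R \cong R_1 \times \cdots \times R_n$ as a finite product of local rings.

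It remains to show that each factor $R_i$ is Henselian. For this, I would note that the product decomposition is witnessed by an orthogonal system of idempotents $e_1,\ldots,e_n \in R$ with $\sum_i e_i = 1$, so that $R_i \cong R/(1 - e_i)R$ is a definable quotient of $R$. In particular $R_i$ is interpretable in $R$, hence NIP; it is also an $\Ff_p$-algebra (as a quotient of one) and local by construction. Proposition~\ref{hensel-wow} then applies to each $R_i$ and yields that $R_i$ is a Henselian local ring, completing the proof.

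There is no real obstacle here: the only nontrivial ingredient is the inheritance of NIP by each factor $R_i$, which is immediate from the interpretation $R_i \cong R/(1-e_i)R$. All of the genuine work has already been carried out in the structural result Lemma~\ref{good-times} (which uses Fact~\ref{width} and Corollary~\ref{forest}) and in the henselianity criterion Proposition~\ref{hensel-wow} (which applies the Stacks Project characterization of Henselian local rings together with a further invocation of Lemma~\ref{good-times} to a finite $R$-algebra).
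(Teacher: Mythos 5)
Your proposal is correct and follows essentially the same route as the paper: good $\Rightarrow$ finite product of local rings via Lemma~\ref{good-times}, each factor interpretable in $R$ (hence NIP), then Henselianity of each factor from Proposition~\ref{hensel-wow}. The only difference is that you spell out the interpretability via idempotents, which the paper leaves as an easy remark.
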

\begin{proof}
  By Lemma~\ref{good-times}, $R$ is good, and $R$ is a finite produt
  of local rings.  These local rings are easily seen to be
  interpretable in $R$, so they are also NIP.  By
  Proposition~\ref{hensel-wow}, they are Henselian local rings.
\end{proof}

\begin{theorem} \label{one-hens}
  Let $R$ be a NIP, integral $\Ff_p$-algebra.  Then $R$ is a Henselian
  local domain.
\end{theorem}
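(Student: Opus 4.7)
The plan is very short, because Theorem~\ref{prod-of-hens} has already done essentially all of the work. By that theorem, $R$ decomposes as a finite product $R \cong R_1 \times \cdots \times R_k$ of Henselian local rings. All that remains is to observe that a product with more than one non-trivial factor cannot be an integral domain.

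More precisely, I would argue as follows. Since $R$ is integral, $R$ is in particular non-zero, so $k \ge 1$ and each $R_i$ is non-zero. Suppose for contradiction that $k \ge 2$. Then the idempotents $e_1 = (1,0,\ldots,0)$ and $e_2 = (0,1,0,\ldots,0)$ are non-zero elements of $R$ whose product is $0$, contradicting the fact that $R$ is a domain. Hence $k = 1$ and $R = R_1$ is itself a Henselian local ring. Since $R$ is also an integral domain by hypothesis, $R$ is a Henselian local domain, as required.

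There is no real obstacle here; the theorem is just the combination of Theorem~\ref{prod-of-hens} with the elementary fact that a finite product of non-zero rings is a domain only when there is a single factor. If one wanted to avoid any reference to idempotents, one could instead invoke the fact that in a finite product $\prod_i R_i$ with $k \ge 2$, the projection kernels give incomparable minimal primes, whereas a domain has a unique minimal prime, namely $(0)$.
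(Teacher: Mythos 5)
Your proof is correct, but it takes a slightly different route from the paper. The paper does not pass through the product decomposition at all: it observes that by Theorem~\ref{linear} the prime ideals of $R$ are linearly ordered, so $R$ has a unique maximal ideal and is local, and then cites Proposition~\ref{hensel-wow} (a NIP local $\Ff_p$-algebra is Henselian) to finish. You instead invoke Theorem~\ref{prod-of-hens} to write $R$ as a finite product of Henselian local rings and then use the elementary fact that an integral domain has no nontrivial idempotents (equivalently, no pair of incomparable minimal primes) to conclude there is only one factor. Both arguments are valid one-step deductions from results already proved, and they ultimately rest on the same machinery: Theorem~\ref{prod-of-hens} itself depends on Theorem~\ref{linear} via Corollary~\ref{forest} and Lemma~\ref{good-times}, so you are not actually bypassing the linear-ordering theorem, just accessing it indirectly. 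What your version buys is that it is purely formal once Theorem~\ref{prod-of-hens} is in hand; what the paper's version buys is a shorter dependency chain (it never needs the decomposition into a product, only locality plus Proposition~\ref{hensel-wow}) and it makes explicit \emph{why} $R$ is local, namely that its primes form a chain.
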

\begin{proof}
  $R$ is a local ring by Theorem~\ref{linear}.  Therefore it is
  Henselian by Proposition~\ref{hensel-wow}.
\end{proof}
Recall that a field $K$ is \emph{large} (also called \emph{ample}) if
every smooth irreducible $K$-curve with at least one $K$-point
contains infinitely many $K$-points \cite{Pop-little}.  By
\cite[Theorem 1.1]{Pop-henselian}, if $R$ is a henselian local domain
that is not a field, then $\Frac(R)$ is large.  Therefore we get the
following corollary:
\begin{corollary} \label{joke-conclusion}
  Let $R$ be a NIP integral domain, and $K = \Frac(R)$.  Suppose $R
  \ne K$ and $K$ has positive characteristic.  Then $K$ is large.
\end{corollary}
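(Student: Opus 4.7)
The plan is to derive this as an immediate consequence of Theorem~\ref{one-hens} together with the cited theorem of Pop. The entire argument amounts to checking that the hypotheses of those two results are in place.

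First I would observe that because $K = \Frac(R)$ has characteristic $p > 0$, so does $R$, since $R$ embeds into $K$. Thus $R$ is a NIP integral $\Ff_p$-algebra, and Theorem~\ref{one-hens} applies to give that $R$ is a Henselian local domain. The standing hypothesis $R \ne K$ says precisely that $R$ is not a field, so I may invoke \cite[Theorem 1.1]{Pop-henselian}, which asserts that the fraction field of any Henselian local domain that is not a field is large. This yields immediately that $K$ is large.

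There is no substantive obstacle in the proof itself; all the difficulty has already been absorbed into Theorem~\ref{one-hens} (which in turn rests on the henselianity machinery of Section~\ref{sec2} and the Kaplan--Scanlon--Wagner style argument of Theorem~\ref{ksw++}). The only tiny point to verify is that ``positive characteristic of $K$'' suffices to put us in the $\Ff_p$-algebra setting required by Theorem~\ref{one-hens}, which is immediate from $R \subseteq K$.
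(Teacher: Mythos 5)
Your proof is correct and matches the paper's reasoning exactly: the paper likewise deduces the corollary by noting that $R$ is an $\Ff_p$-algebra (since $\characteristic K = p > 0$), applying Theorem~\ref{one-hens} to get that $R$ is a Henselian local domain, and then invoking \cite[Theorem 1.1]{Pop-henselian} using $R \ne K$. No gaps.
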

Large stable fields are classified \cite{firstpaper}.  If we could
extend this classification to large NIP fields, then
Corollary~\ref{joke-conclusion} would tell us something very strong
about NIP integral domains of positive characteristic.

\begin{acknowledgment}
  The author was supported by Fudan University, and by Grant
  No. 12101131 of the National Natural Science Fund of China.
\end{acknowledgment}

\bibliographystyle{plain} \bibliography{little-bib}{}

\begin{thebibliography}{10}

\bibitem{NIP-char}
Sylvy Anscombe and Franziska Jahnke.
\newblock Characterizing {NIP} henselian fields.
\newblock {arXiv:1911.00309v1 [math.LO]}, 2019.

\bibitem{halevi-delbee}
Yatir Halevi and Christian d'Elb\'ee.
\newblock Dp-minimal integral domains.
\newblock {arXiv:2006.05755v1 [math.LO]}, 2020.

\bibitem{halevi-hasson-jahnke}
Yatir Halevi, Assaf Hasson, and Franziska Jahnke.
\newblock A conjectural classification of strongly dependent fields.
\newblock {\em Bulletin of Symbolic Logic}, 25(2):182--195, June 2019.

\bibitem{goo}
Ehud Hrushovski, Ya'acov Peterzil, and Anand Pillay.
\newblock Groups, measures, and the {NIP}.
\newblock {\em J. Amer. Math. Soc.}, 21(2):563--596, April 2008.

\bibitem{wjdpmin-published}
Will Johnson.
\newblock The classification of dp-minimal and dp-small fields.
\newblock {\em accepted in J. Eur. Math. Soc. (JEMS), $\approx$
  arXiv:1507.02745}.

\bibitem{prdf6}
Will Johnson.
\newblock Dp-finite fields {VI}: the dp-finite {S}helah conjecture.
\newblock {arXiv:2005.13989v1 [math.LO]}, May 2020.

\bibitem{prdf1a}
Will Johnson.
\newblock Dp-finite fields {I(A)}: the infinitesimals.
\newblock {\em Annals of Pure and Applied Logic}, 172(6):102947, 2021.

\bibitem{prdf1b}
Will Johnson.
\newblock Dp-finite fields {I(B)}: positive characteristic.
\newblock {\em Annals of Pure and Applied Logic}, 172(6):102949, 2021.

\bibitem{firstpaper}
Will Johnson, Minh~Chieu Tran, Erik Walsberg, and Jinhe Ye.
\newblock The \'etale-open topology and the stable fields conjecture.
\newblock {\em accepted in J. Eur. Math. Soc. (JEMS), arXiv:2009.02319}.

\bibitem{NIPfields}
Itay Kaplan, Thomas Scanlon, and Frank~O. Wagner.
\newblock {A}rtin-{S}chreier extensions in dependent and simple fields.
\newblock {\em Israel Journal of Mathematics}, 185(1):141--153, October 2011.

\bibitem{Pop-little}
Florian Pop.
\newblock Little survey on large fields - old {\&} new.
\newblock In {\em Valuation Theory in Interaction}, pages 432--463. European
  Mathematical Society Publishing House.

\bibitem{Pop-henselian}
Florian Pop.
\newblock Henselian implies large.
\newblock {\em Annals of Mathematics}, 172(3):2183--2195, October 2010.

\bibitem{PZ}
Alexander Prestel and Martin Ziegler.
\newblock Model theoretic methods in the theory of topological fields.
\newblock {\em Journal f{\"u}r die reine und angewandte Mathematik},
  299--300:318--341, 1978.

\bibitem{NIPguide}
Pierre Simon.
\newblock {\em A guide to NIP theories}.
\newblock Lecture Notes in Logic. Cambridge University Press, July 2015.

\bibitem{stacks-project}
The {Stacks Project Authors}.
\newblock \textit{Stacks Project}.
\newblock \url{https://stacks.math.columbia.edu}, 2021.

\end{thebibliography}

\end{document}